\documentclass{amsart}
\usepackage{amscd,amsfonts,amsmath,amssymb,amsthm}
\usepackage{xcolor, graphicx, ytableau} 
\usepackage[colorlinks]{hyperref}
\usepackage[normalem]{ulem}
\usepackage{tikz}
\usepackage{chessfss}
\usetikzlibrary{calc, fit, backgrounds}
\newlength{\cellsize}
\definecolor{yellow}{rgb}{0.5,0.5,0}
\definecolor{green}{rgb}{0,0.5,0}
\definecolor{orange}{rgb}{1, .57, 0.09}
\definecolor{color3}{rgb}{0.53125,0.796875
,0.9296875}
\definecolor{color2}{rgb}{0.86328125,0.796875,0.46484375}
\definecolor{color1}{rgb}{0.796875,0.3984375,0.46484375}
\definecolor{lc3}{rgb}{0.8828125,0.94921875
,0.982421875}
\definecolor{lc2}{rgb}{0.9658203125,0.94921875,0.8662109375}
\definecolor{lc1}{rgb}{0.94921875,0.849609375,0.8662109375}

\newcommand{\blacksquarenew}{\raisebox{-.86pt}{$\blacksquare$}}

\usepackage{skak}  
\usepackage{graphicx}

\usepackage{multicol}
\usepackage{geometry}

\usepackage{chessfss}
\newcommand{\Rook}{\raisebox{-1.5pt}{\scalebox{.4}{\BlackRookOnWhite}}}
\newcommand{\WRook}{\raisebox{-1.5pt}{\scalebox{.4}{\WhiteRookOnWhite}}}

\theoremstyle{plain}
\newtheorem{conj}{Conjecture}[section]
\newtheorem{prop}[conj]{Proposition}
\newtheorem{proposition}[conj]{Proposition}

\newtheorem{theorem}[conj]{Theorem}
\newtheorem{lemma}[conj]{Lemma}
\newtheorem{corollary}[conj]{Corollary}

\theoremstyle{remark}
\newtheorem{example}[conj]{Example}
\newtheorem{remark}[conj]{Remark}
\newtheorem{question}[conj]{Question}

\numberwithin{equation}{section}

\newcommand{\op}{\operatorname}
\newcommand{\LL}{\textsf{L}}
\newcommand{\Graph}[1]{\op{Graph}(#1)}
\newcommand{\Grid}[1]{\op{Grid}(#1)}
\newcommand{\RP}{\op{RP}}
\newcommand{\perm}{\op{perm}}
\newcommand{\len}{\op{len}}

\definecolor{darkgreen}{RGB}{55,138,0}

\title{Non-attacking rook placements on crossword grids}
\author{Joel Brewster Lewis and Robert Won}
\address{Department of Mathematics, The George Washington University, Washington, DC 20052, USA}
\email{jblewis@gwu.edu}
\email{robertwon@gwu.edu}
\date{}

\begin{document}

\begin{abstract}
We introduce the notion of a non-attacking rook placement on a crossword grid. A crossword grid is a collection of white squares (which comprise across and down words) and black squares (which separate the words), and a complete non-attacking rook placement on such a grid is a subset of white squares which intersects every across and every down word exactly once. We prove an upper bound on the number of rook placements that a general grid can admit. We then study sparse grids in which no two black squares share an edge and show that rook placements on certain sparse grids correspond bijectively to alternating sign matrices with prescribed $-1$ entries. Specializing further to permutation grids, we prove that every permutation grid admits at least one rook placement, and characterize the permutations whose grids admit exactly one placement in terms of the Robinson--Schensted correspondence. Throughout, we pose a variety of conjectures and open questions.
\end{abstract}

\subjclass[2020]{05A15, 05A05}
\keywords{Crossword puzzle, rook theory, rook placement, alternating sign matrix}


\maketitle

\section{Introduction}

Crossword puzzles are a popular pastime, a status they have held since the modern crossword puzzle made its debut in America in the \emph{New York World} in 1913 \cite{dau25}. American-style crossword puzzles are governed by a short, explicit set of construction rules---there is an $n \times n$ grid of white squares and black squares with $180^\circ$ rotational symmetry, the white squares form a single connected region, and all of the ``words" in the grid are made up of at least three white squares. We call a grid which satisfies the conditions above a \emph{legal} crossword grid. 

The structure underlying crossword puzzles raises a number of interesting mathematical questions, and there is a growing body of mathematical research about crosswords. For example, Ferry showed that when $n = 15$ (the most common crossword grid size), the number of legal grids such that there are no all-black rows or columns is  404,139,015,237,875 \cite{ferry}. For general $n$, the number of legal grids is not known. Ferland and Pratt studied the maximum number of words that can appear in a legal crossword grid \cite{fp19, f20}. They prove upper and lower bounds, determining the exact maximum for all odd $n$. 

In addition to combinatorial questions about crossword grids, there has also been work studying \emph{filling} crossword grids and solving crossword puzzles. Gourv\`es, Harutyunyan, Lampis, and Melissinos showed that the problem of filling a crossword grid with letters so that every across word and down word belongs to some fixed dictionary is NP-hard \cite{ghlm}. McSweeney studied how the structure of a crossword grid influences solving difficulty by modeling crossword solving as an epidemic-type process on a certain bipartite graph associated to the grid \cite{m16} (see the definition of $\Graph{G}$ below), while Hartmann modeled crossword-solving as a percolation process \cite{hart}.

This paper is inspired by a crossword puzzle by Marshal Herrmann that appeared in \emph{The New York Times} on January 16, 2024 \cite{puzzle}. In the solution to this crossword puzzle, pictured in Figure~\ref{fig:puzz}, the letter ``L" appears exactly once in every across word and every down word.  This is strongly reminiscent of a \emph{non-attacking rook placement} on the grid, where each ``L" is interpreted as a rook attacking every square in the words containing it.  

In the classical notion of a rook placement on a chessboard (as in, e.g., \cite{gjw, kr, r58}), a placement of rooks on a board $B$ (a subset of the rectangular grid $[m] \times [n]$) is \emph{non-attacking} if no two rooks share a row or column.  The notion of ``rook placements on a crossword grid'' differs in that the rooks cannot jump over black squares (i.e., those not belonging to the board $B$).  Although its roots are recreational, classical rook theory turns out to have deep connections to many areas of mathematics.  In this paper, we hope to show that ``crossword rook theory'' likewise has rich connections to modern discrete mathematics.  For example, we establish that rook placements on certain (\emph{sparse}) crossword grids are precisely the same as alternating sign matrices, and give a new characterization of the \emph{skew-merged permutations} in terms of rook placements on crossword boards. 

\begin{figure}
\label{fig:puzz}
\ytableausetup{boxsize=1.5em}
\[
\begin{ytableau}
*(white)&\LL&&&*(black)&&\LL&&&&*(black)&&\LL&& \\
\LL&&&&*(black)&&&&\LL&&*(black)&&&\LL& \\
&&&\LL&*(black)&&&\LL&&&*(black)&&&&\LL \\
&&\LL&&&&&&*(black)&&\LL&&&& \\
*(black)&*(black)&*(black)&&&\LL&&*(black)&&\LL&&&*(black)&*(black)&*(black)\\
*(black)&&\LL&&&&*(black)&&&&&\LL&&& \\
&&&&\LL&*(black)&&&\LL&&&*(black)&&&\LL \\
\LL&&&&*(black)&&&\LL&&&*(black)&&\LL&& \\
&\LL&&*(black)&&&\LL&&&*(black)&&&&\LL& \\
&&&&\LL&&&&*(black)&&&\LL&&&*(black) \\
*(black)&*(black)&*(black)&&&\LL&&*(black)&&&\LL&&*(black)&*(black)&*(black)\\
&&\LL&&&&*(black)&\LL&&&&&&& \\
\LL&&&&*(black)&&&&\LL&&*(black)&&\LL&& \\
&\LL&&&*(black)&&\LL&&&&*(black)&&&\LL& \\
&&&\LL&*(black)&&&&&\LL&*(black)&&&&\LL
\end{ytableau}
\]
\caption{
\emph{The New York Times} crossword puzzle grid from January 16, 2024, with every instance of the letter ``L" from the solution \cite{puzzle}.}
\end{figure}

The plan of the paper is as follows.  In Section~\ref{sec:generalgrids}, we introduce basic notation and definitions for rook placements on crossword grids and prove an upper bound for the number of rook placements that a crossword grid can admit. In Section~\ref{sec:permgrids}, we restrict our attention to sparse grids, that is, crossword grids where no two black squares share an edge. For certain sparse grids, we show a surprising connection to alternating sign matrices. We then focus on permutation grids, which have exactly one black square per row and column. In Section~\ref{sec:questions}, we pose questions and propose future research directions.

\subsection*{Acknowledgments}
J. B. Lewis was supported in part by a gift from the Simons Foundation (MPS-TSM-00006960).
R. Won was supported in part by Simons Foundation grant \#961085.

\section{Rook placements on general crossword grids}

\label{sec:generalgrids}

An $m \times n$ \emph{crossword grid} or \emph{grid} is an $m \times n$ array of squares, some of which are white and some of which are black. (We do not impose the symmetry, connectedness, or minimum word length assumptions that are the usual conventions for American-style crossword puzzles.)  To any crossword grid, we can associate an $m \times n$ matrix whose $(i,j)$ entry is $0$ if the corresponding square is white and $1$ if the corresponding square is black. We also refer to the squares in the grid by their row and column, using the matrix convention for numbering rows and columns. We refer to rows $1$ and $m$ as \emph{edge rows} and columns $1$ and $n$ as \emph{edge columns}. The squares in the edge rows and edge columns are called \emph{edge squares}. All non-edge squares in a crossword grid are called \emph{interior squares}. The squares $(1,1)$, $(1,n)$, $(m,1)$, and $(m,n)$ are called \emph{corner squares}.

We also use the cardinal directions to describe relative positions within a crossword grid. We say that the square $(i,j)$ is \emph{north} (N) of $(k, \ell)$ if $i < k$, \emph{south} (S) if $i > k$, \emph{west} (W) if $j < \ell$, and \emph{east} (E) if $j > \ell$. We also use the intercardinal directions: we say that $(i,j)$ is \emph{northwest} (NW) of $(k, \ell)$ if $i < k$ and $j < \ell$ and similarly for \emph{northeast} (NE), \emph{southwest} (SW), and \emph{southeast} (SE).

An \emph{across word} in a crossword grid is a maximal contiguous sequence of white squares within a single row. Similarly, a \emph{down word} is a maximal contiguous sequence of white squares within a single column. We say that an across word and down word \emph{intersect} if they share a common square.

To any crossword grid $G$, we can associate a bipartite graph $\Graph{G}$, as follows: the vertices in the first part correspond to the across words and the vertices in the second part correspond to the down words. There is an edge in $\Graph{G}$ between two words if and only if they intersect. This is the same bipartite graph defined in \cite{m16}; for other graphs that can be associated to crossword grids, see \cite{cm24}.

A \emph{(non-attacking) rook placement} on a crossword grid $G$ is a subset $R$ of the white squares in the grid such that no two squares in $R$ lie in the same across word or the same down word. We call the squares in $R$ \emph{rooks} and picture the rook placement by drawing $G$ and placing a chess rook into each square in $R$:
\[
\ytableausetup{boxsize=1em}
\begin{ytableau}
*(white)&&\\
&*(black)&\Rook\\
\Rook&&\end{ytableau}
\]
Each rook lies in one across word and one down word, and we say that the rook \emph{attacks} all of the other squares in its across word and its down word.

A rook placement $R$ on $G$ is called \emph{maximal} if for any rook placement $R'$ on $G$, if $R \subseteq R'$ then $R = R'$. We call $R$ \emph{maximum} if for any other rook placement $R'$ on $G$, we have $|R| \geq |R'|$. Finally, $R$ is called \emph{complete} if $R$ contains exactly one white square in every across word and every down word of $G$. We remark that no two of maximal, maximum, and complete coincide in general (see Figure~\ref{fig:maxcomp}) but complete implies maximum implies maximal. A rook placement on $G$ is equivalent to a matching on $\Graph{G}$, while a complete rook placement is equivalent to a perfect matching.
\begin{figure}[h!]
\label{fig:maxcomp}
\[
\begin{ytableau}
*(white)&\Rook&\\
&*(black)&\Rook\\
\Rook&&\end{ytableau}
\hspace{24pt}
\begin{ytableau}
*(black)&\Rook& *(black)\\
\Rook&&\\
*(black)&&*(black)\end{ytableau}
\hspace{24pt}
\begin{ytableau}
*(white)&\Rook&\\
\Rook&*(black)&\Rook\\
&\Rook&\end{ytableau}
\]
\caption{
For these three rook placements, the first is maximal but not maximum, the second is maximum but not complete, while the third is complete (and hence maximum and maximal).}
\end{figure}
Henceforth, throughout this paper, by a \emph{rook placement}, we mean a complete non-attacking rook placement. Given a grid $G$, let $\RP(G)$ denote the set of all possible (complete, non-attacking) rook placements on $G$. 

Given a grid $G$, we may also form the \emph{biadjacency matrix} $B_G$ of the bipartite graph $\Graph{G}$, which is the $\{0, 1\}$ matrix whose rows are indexed by across words and columns are indexed by down words and whose $(i,j)$-entry is $1$ if and only if the $i$th across and $j$th down word intersect. Observe that $|\RP(G)|$ is precisely  $\perm(B_G)$, the permanent of $B_G$.

Clearly, a necessary condition for a grid $G$ to admit a rook placement is that $G$ must have the same number of across words as down words, so that $\Graph{G}$ has the same number of vertices in each part. For example, the first grid below does not admit a rook placement since it has four across words and only three down words. However, this condition is not sufficient, as the second grid below has the same number of across words and down words (namely, three of each) but nevertheless does not admit a rook placement:
\begin{equation}\label{eq:no rook placements}
\begin{ytableau}
*(white)&&*(black)\\
&&\\
&*(black)&\\
\end{ytableau}
\hspace{24pt}
\begin{ytableau}
*(black)&& *(black)\\
&&\\
*(black)&&*(black)\end{ytableau}
\end{equation}

The examples above include words of length $1$ and $2$, and so violate the usual conventions of American-style crossword puzzles.  Our first result establishes that this requirement is superficial, in that the examples above can be extended to examples of the same behavior with any minimum word length. There is a simple \emph{$k$-inflation} operation on crossword grids in which each square (white or black) is replaced by a $k \times k$ block of the same color. For example, the $3$-inflation of the second example above is
\begin{center}
\raisebox{-3em}{
\begin{ytableau}
*(black)&&*(black)\\
&& \\
*(black)&&*(black)
\end{ytableau}
}
\raisebox{-4em}{\qquad $\longrightarrow$\qquad}
\begin{ytableau}
*(black)&*(black)&*(black)&&&&*(black)&*(black)&*(black)\\
*(black)&*(black)&*(black)&&&&*(black)&*(black)&*(black)\\
*(black)&*(black)&*(black)&&&&*(black)&*(black)&*(black)\\
&&&&&&&& \\
&&&&&&&& \\
&&&&&&&& \\
*(black)&*(black)&*(black)&&&&*(black)&*(black)&*(black)\\
*(black)&*(black)&*(black)&&&&*(black)&*(black)&*(black)\\
*(black)&*(black)&*(black)&&&&*(black)&*(black)&*(black)
\end{ytableau}
\end{center}

\begin{lemma}\label{lem:inflation}
    A crossword grid admits a rook placement if and only if any of its $k$-inflations admits a rook placement.
\end{lemma}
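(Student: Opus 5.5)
We pass to the bipartite graphs and use Hall's theorem. Let $G$ be a grid and $G^{(k)}$ its $k$-inflation. Each across word $a$ of $G$, lying in row $i$ and occupying columns $j_1,\dots,j_2$, gives $k$ across words $a^{(1)},\dots,a^{(k)}$ of $G^{(k)}$. These lie in rows $k(i-1)+1,\dots,ki$ and occupy columns $k(j_1-1)+1,\dots,kj_2$. Every across word of $G^{(k)}$ arises this way exactly once, and the same holds for down words. Moreover, $a^{(s)}$ meets $d^{(t)}$ in $G^{(k)}$ if and only if $a$ meets $d$ in $G$, for every $s,t$. Indeed, the square where $a^{(s)}$ and $d^{(t)}$ would cross lies in the $k\times k$ block inflating the square where the row of $a$ and the column of $d$ cross, and that block is white exactly when that square is white. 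So $\Graph{G^{(k)}}$ is the ``blow-up'' of $\Graph{G}$: each vertex is replaced by $k$ copies, and each edge by a complete bipartite $K_{k,k}$. Equivalently, $B_{G^{(k)}} = B_G \otimes J_k$, where $J_k$ is the all-ones $k\times k$ matrix. Verifying this word correspondence carefully is the main (though routine) step.

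With this in hand, we prove the two directions.

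\emph{($\Rightarrow$)} Given a rook placement $R$ on $G$, i.e.\ a perfect matching of $\Graph{G}$, we match $a^{(s)}$ with $d^{(s)}$ whenever $a$ is matched to $d$. Concretely, if the rook sits at $(i,j)$, we place rooks at the diagonal squares $(k(i-1)+s,\,k(j-1)+s)$ for $s=1,\dots,k$. This is a perfect matching of the blow-up, hence a rook placement on $G^{(k)}$.

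\emph{($\Leftarrow$)} Suppose $\Graph{G^{(k)}}$ has a perfect matching. Then the number of across words equals the number of down words in $G$, since both counts get multiplied by $k$. We verify Hall's condition for $\Graph{G}$. Take a set $S$ of across words of $G$ with neighborhood $N(S)$. The $k|S|$ copies of the words in $S$ have neighborhood in $\Graph{G^{(k)}}$ equal to exactly the $k|N(S)|$ copies of the words in $N(S)$. Hall's condition in the blow-up gives $k|N(S)|\ge k|S|$, hence $|N(S)|\ge |S|$. By Hall's theorem, $\Graph{G}$ has a perfect matching, i.e.\ $G$ admits a rook placement.

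An alternative for ($\Leftarrow$) avoids Hall's theorem. A perfect matching of the blow-up projects to a $k$-regular spanning bipartite multigraph on the vertices of $\Graph{G}$, using only edges of $\Graph{G}$. By K\H{o}nig's theorem such a multigraph contains a perfect matching. Either route works; I would present the Hall argument since it is shortest.
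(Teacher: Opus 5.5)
Your proof is correct and follows essentially the same route as the paper. You identify $\Graph{G^{(k)}}$ as the $k$-fold blow-up of $\Graph{G}$, fill each rook's block with a permutation matrix (your diagonal) for the forward direction, and transfer Hall's condition between the two graphs for the converse; you are in fact more careful than the paper, which writes $N > n$ where a Hall violator needs $N < n$ and does not handle unequal word counts, a case you dispose of by the $k$-scaling observation. (One small wording fix: $a^{(s)}$ meets $d^{(t)}$ because the crossing square lies in the inflated column range of $a$ and row range of $d$, i.e.\ because $(i,j)$ lies in both $a$ and $d$, not merely because $(i,j)$ is white.)
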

\begin{proof}
 If a grid admits a rook placement then we can construct a rook placement on its $k$th inflation by replacing each white square that contains rook with a permutation matrix of rooks.  We now consider the converse.

 If a grid has bipartite graph $\Gamma$, then it is easy to describe the bipartite graph $\Gamma'$ of the $k$th inflation: replace each vertex $v$ of $\Gamma$ with $k$ vertices $v^1, \dots, v^k$ in $\Gamma'$. If there is an edge between $v$ and $w$ in $\Gamma$, then there are all possible edges between $\{v^1,\dots, v^k\}$ and $\{w^1, \dots, w^k\}$ in $\Gamma'$. If there is not an edge between $v$ and $w$ in $\Gamma$, then there are no edges between $\{v^1,\dots, v^k\}$ and $\{w^1, \dots, w^k\}$ in $\Gamma'$. 

 Suppose that a grid $G$ does not admit a rook placement, so that its bipartite graph $\Gamma$ does not admit a perfect matching.  By K\"{o}nig's theorem, there is some set of across vertices $v_1, \dots, v_n$ whose neighborhood has cardinality $N > n$. Retain the notation above for the bipartite graph for the $k$th inflation. In the inflation, the vertices $v_1^1, \dots v_1^k, v_2^1, \dots, v_2^k, \dots, v_n^1, \dots, v_n^k$ form a set of $kn$ vertices with neighborhood of cardinality $kN$, so the $k$th inflation does not admit a rook placement.
\end{proof}

In the remainder of this section, we address the following question: given a crossword grid $G$, how many rook placements does $G$ admit, that is, how large is $|\RP(G)|$? Since this question is equivalent to counting perfect matchings on $\Graph{G}$, in general, the problem is \#P complete. As a warm up, we consider the two easiest cases, restricting our attention to square $n \times n$ grids.

If $G$ has no black squares, then $|\RP(G)| = n!$. If $G$ has a single black square, then there are three cases: (1) the black square is a corner square, (2) the black square is an edge square but not a corner square, or (3) the black square is an interior square. In case (1), we have $|\RP(G)| = n! - (n-1)!$ as every rook placement is given by a permutation on the $n \times n$ grid which avoids the black corner. In case (2), the number of across words is not equal to the number of down squares so $|\RP(G)| = 0$. In case (3), suppose that the black square $B$ is in position $(i,j)$. 

In any rook placement on $G$, there must be a rook in the across word of length $j - 1$ to the west of $B$. Likewise, there are rooks in the across word of length $n - j$ to the east of $B$, the down word of length $i - 1$ to the north of $B$ and the down word of length $n - i$ to the south of $B$. For any choice of placement for these four rooks, if we delete every square that is attacked in $G$, we are left with an $(n - 3) \times (n - 3)$ grid of white squares, which admits $(n - 3)!$ possible rook placements. Hence
\[
|\RP(G)| = (i-1)(j-1)(n-i)(n-j) \cdot (n-3)!.
\]
This is maximized when $i = j = \lceil n/2 \rceil$.  Thus, as $n$ grows, the maximum number of rook placements with a single black square grows asymptotically to $\frac{n}{16} \cdot n!$, when the single black square is as close to the center as possible.

\begin{question}
\label{q:2mostperk}
 For a fixed number $k$ of black squares in an $n \times n$ grid, which arrangement admits the most rook placements? How many?
\end{question}

Of course, specifying that an $m \times n$ grid has $k$ black squares is the same as specifying that it has $mn - k$ white squares. We prove the following universal upper bound on the number of rook placements that a grid can admit depending on the number of white squares and across words in the grid.

\begin{proposition}\label{prop:upper bound}
If $G$ is a crossword grid with $N$ white squares and $a$ across words, then 
\[
|\RP(G)| \leq \left[ \left(\frac{N}{a}\right)^{1 + a/N} e^{a/N - 1}\right]^a  \leq e^{CN},
\]
where $C \approx 0.27386$ is the unique maximum value of $f(x) = - x \ln(x) - x^2 \ln(x) - x + x^2$ on the interval $(0,1)$.
\end{proposition}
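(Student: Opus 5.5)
The plan is to bound the permanent of the biadjacency matrix $B_G$ with Bregman's theorem (the Minc conjecture). Recall that $|\RP(G)| = \perm(B_G)$. Bregman's theorem says that a $0/1$ matrix with row sums $r_1,\dots,r_a$ has permanent at most $\prod_i (r_i!)^{1/r_i}$. If $B_G$ is not square there are no rook placements and the bound is trivial, so we may assume $B_G$ is square.

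\textbf{Row sums.} Row $i$ of $B_G$ corresponds to an across word. Each square of that word lies in a different down word, so the row sum $r_i$ is exactly the length of the $i$th across word. Hence $r_i \ge 1$ and $\sum_i r_i = N$, and in particular $a \le N$.

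\textbf{A bound on each factor.} Next I replace each Bregman factor by an explicit function of $r$. Comparing the sum with an integral gives
\[
\ln r! = \sum_{k=2}^{r}\ln k \le \int_1^r \ln x\,dx + \ln r = r\ln r - r + 1 + \ln r .
\]
Therefore $\ln (r!)^{1/r} \le g(r)$, where
\[
g(r) := \Bigl(1+\tfrac1r\Bigr)\ln r + \tfrac1r - 1 .
\]

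\textbf{Concavity and Jensen.} Extend $g$ to real $r \ge 1$. A direct computation gives $g'(r) = \frac1r - \frac{\ln r}{r^2}$ and
\[
g''(r) = \frac{2\ln r - r - 1}{r^3},
\]
which is negative because $2\ln r \le 2(r-1) < r+1$. So $g$ is concave, and Jensen's inequality gives
\[
\ln|\RP(G)| \le \sum_i g(r_i) \le a\, g(N/a) .
\]
Exponentiating yields exactly $\bigl[(N/a)^{1+a/N} e^{a/N-1}\bigr]^a$, which is the first inequality. I expect this step to be the main obstacle: one has to find the right elementary upper bound for $r!$ so that it is both sharp enough to give the stated expression and concave, so that Jensen applies. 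The bound obtained by comparison with an integral does both.

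\textbf{The second inequality.} Put $x = a/N \in (0,1]$. Then
\[
a\, g(N/a) = N x\Bigl[(1+x)\ln(1/x) + x - 1\Bigr] = N\bigl(-x\ln x - x^2\ln x - x + x^2\bigr) = N f(x).
\]
At $x = 1$ we have $f(1) = 0 \le C$. For $x \in (0,1)$, $f(x) \le C$ by the definition of $C$ as the maximum of $f$ there. Hence the bound is at most $e^{CN}$.

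It remains only to justify that $f$ has a unique interior maximum on $(0,1)$, and to confirm its numerical value $C \approx 0.27386$ numerically. Existence follows because $f \to 0$ at both ends of the interval while $f > 0$ inside.
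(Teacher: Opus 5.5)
Your proposal is essentially the paper's own proof. It uses Bregman--Minc applied to $B_G$ and identifies the row sums with across-word lengths. Your integral-comparison bound $\ln r!\le r\ln r-r+\ln r+1$ gives exactly the paper's $\phi$ as your $g$, and you then use concavity with Jensen and the substitution $x=a/N$ to obtain $Nf(x)$. Your remark about a non-square $B_G$ is a harmless extra.

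One justification needs repair. The chain $2\ln r\le 2(r-1)<r+1$ holds only for $r<3$, so it does not show $g''<0$ for all $r\ge 1$. Instead, note that $r+1-2\ln r$ is minimized at $r=2$, where it equals $3-2\ln 2>0$.

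That same inequality, applied with $y=1/x$, gives $f''(x)=-\tfrac1x-2\ln x-1<0$. So $f$ is strictly concave and its maximizer is unique, which settles the point you left open; the paper handles it only numerically.
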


The same bounds hold if we use $d$, the number of down words, in place of $a$.

\begin{proof}
Let $G$ be a grid with $a$ across words and $N$ white squares. Since $|\RP(G)| = \perm(B_G)$ where $B_G$ is the biadjacency matrix of $\Graph{G}$, the Bregman--Minc inequality \cite{bregman} gives the upper bound
\[
|\RP(G)| = \perm(B_G) \leq \prod_{i=1}^a (r_i!)^{1/r_i},  
\]
where $r_i$ is the $i$th rowsum of $B_G$. Since the $i$th row of $B_G$ has a $1$ for each down word which intersects the $i$th across word, $r_i$ is precisely the length of the $i$th across word. We therefore obtain
\[
|\RP(G)| \leq \prod_{b \text{ an across word}} (\len(b)!)^{1/\len(b)}.
\]

For each positive integer $r$, a standard bound (coming from a left-endpoint Riemann sum approximation for $\int_{1}^{r} \ln(x) \, dx$) is $\ln(r!) \leq r \ln r - r + \ln r + 1$. Writing $g(r) = \frac{1}{r} \ln(r!)$ and $\phi(r) = \ln r - 1 + \frac{1 + \ln r}{r}$, this means that $g(r) \leq \phi(r)$ for all positive integers $r$. 

The second derivative $\phi''(r) = \frac{2 \ln (r) - r - 1}{r^3}$ is negative for $r > 0$ and so $\phi(r)$ is concave down on the interval $(0,\infty)$. By Jensen's inequality, we have
\[
\ln |\RP(G)| \leq \sum_{i = 1}^a g(r_i) \leq \sum_{i=1}^a \phi(r_i) \leq a \phi\left( \frac{\sum r_i}{a} \right).
\]
Moreover, since $\sum_{i=1}^{a} r_i = N$ is the number of white squares, we see that
\[
\ln |\RP(G)| \leq a \phi\left(\frac{N}{a}\right) = a \ln\left(\frac{N}{a} \right) - a + \frac{a^2}{N} + \frac{a^2}{N}\ln\left(\frac{N}{a} \right) 
\]
and so
\[
|\RP(G)| \leq \left[ \left(\frac{N}{a}\right)^{1 + a/N} e^{a/N - 1}\right]^a.
\]

Let $f(x) = - x \ln(x) - x^2 \ln(x) - x + x^2$ and observe that $Nf(a/N) = a \phi(N/a)$. Since $a/N \in (0, 1]$, we have that $\ln|\RP(G)| \leq N f(x^*)$ where $x^* \in (0,1]$ maximizes $f(x)$. Since $\lim_{x \to 0^+} f(x) = 0 = f(1)$, the maximum of $f(x)$ on this interval occurs at an interior critical point $x^*$. Differentiating yields $f'(x) = - \ln(x) - 2x \ln(x) + x - 2$ and setting $f'(x) = 0$ shows that 
\[
(1 + 2x^*) \ln(x^*) = x^* - 2.
\]
Numerically solving for $x^*$ gives a unique $x^* \approx 0.42987$ with $f(x^*) \approx 0.27386$. This completes the proof.
\end{proof}

For $k$ in particular regimes, it should be possible to give much sharper bounds.  For example, we conjecture the following bounds when the number of black squares $k$ is small compared to $n$.

\begin{conj}
Let $G$ be an $n \times n$ crossword grid with $k$ black squares. If $k = o(n)$ then
    \[
    |\RP(G)| \leq \frac{n^{4k}}{2^{4k}} \cdot (n - 3k)!,
    \]
and this bound is achieved asymptotically for some $G$.
\end{conj}

We close this section by giving an example of a crossword grid which admits many placements.

\begin{example}
For $n = 6m + 3$, consider the grid
\[
\begin{ytableau}
*(white)&&&*(black)&*(black)&*(black)&&&&*(black)&*(black)&*(black)&&& \\
&&&*(black)&*(black)&*(black)&&&&*(black)&*(black)&*(black)&&& \\
&&&*(black)&*(black)&*(black)&&&&*(black)&*(black)&*(black)&&& \\
*(black)&*(black)&*(black)&&&&*(black)&*(black)&*(black)&&&&*(black)&*(black)&*(black)\\
*(black)&*(black)&*(black)&&&&*(black)&*(black)&*(black)&&&&*(black)&*(black)&*(black)\\
*(black)&*(black)&*(black)&&&&*(black)&*(black)&*(black)&&&&*(black)&*(black)&*(black) \\
&&&*(black)&*(black)&*(black)&&&&*(black)&*(black)&*(black)&&& \\
&&&*(black)&*(black)&*(black)&&&&*(black)&*(black)&*(black)&&& \\
&&&*(black)&*(black)&*(black)&&&&*(black)&*(black)&*(black)&&& \\
*(black)&*(black)&*(black)&&&&*(black)&*(black)&*(black)&&&&*(black)&*(black)&*(black)\\
*(black)&*(black)&*(black)&&&&*(black)&*(black)&*(black)&&&&*(black)&*(black)&*(black)\\
*(black)&*(black)&*(black)&&&&*(black)&*(black)&*(black)&&&&*(black)&*(black)&*(black) \\
&&&*(black)&*(black)&*(black)&&&&*(black)&*(black)&*(black)&&& \\
&&&*(black)&*(black)&*(black)&&&&*(black)&*(black)&*(black)&&& \\
&&&*(black)&*(black)&*(black)&&&&*(black)&*(black)&*(black)&&&
\end{ytableau}
\]
made of $(2m + 1)^2$ $3 \times 3$ blocks, whose density of black squares is just slightly less than $\frac{1}{2}$.  There are
\[
6^{2m^2 + 2m + 1} = 6^{n^2/18 + 1/2}
\]
rook placements on this grid.
Consider the connected grid that arises by cutting the corners out of each black block:
\[
\begin{ytableau}
*(white)&&&*(black)&*(black)&*(black)&&&&*(black)&*(black)&*(black)&&& \\
&&&*(black)&*(black)&*(black)&&&&*(black)&*(black)&*(black)&&& \\
&&&&*(black)&&&&&&*(black)&&&& \\
*(black)&*(black)&&&&&&*(black)&&&&&&*(black)&*(black)\\
*(black)&*(black)&*(black)&&&&*(black)&*(black)&*(black)&&&&*(black)&*(black)&*(black) \\
*(black)&*(black)&&&&&&*(black)&&&&&&*(black)&*(black)\\
&&&&*(black)&&&&&&*(black)&&&& \\
&&&*(black)&*(black)&*(black)&&&&*(black)&*(black)&*(black)&&& \\
&&&&*(black)&&&&&&*(black)&&&& \\
*(black)&*(black)&&&&&&*(black)&&&&&&*(black)&*(black)\\
*(black)&*(black)&*(black)&&&&*(black)&*(black)&*(black)&&&&*(black)&*(black)&*(black)\\
*(black)&*(black)&&&&&&*(black)&&&&&&*(black)&*(black)\\
&&&&*(black)&&&&&&*(black)&&&& \\
&&&*(black)&*(black)&*(black)&&&&*(black)&*(black)&*(black)&&& \\
&&&*(black)&*(black)&*(black)&&&&*(black)&*(black)&*(black)&&&
\end{ytableau}
\]
Observe that every word in this grid contains (as a subset) one word in the previous grid, some squares that were black in the previous grid, and nothing else; consequently each rook placement on the previous grid is also a rook placement on this grid. 

In the case $n = 6m + 3$, the number of white squares in this connected grid is $N = 9(2m^2 + 2m + 1) + 2(4m) + 4(2m^2 - 2m) = 26m^2 + 18m + 9 = \frac{13}{18}n^2 - \frac{4}{3}n + \frac{13}{2}$, and it admits at least $C \cdot 6^{n^2/18} \approx C \cdot \exp(0.09954n^2)$ rook placements, whereas the upper bound of Proposition~\ref{prop:upper bound} is less than $\exp(.274 \cdot (\frac{13}{18}n^2 - \text{l.o.t.})) \approx C' \cdot \exp(.198 n^2)$. 
\end{example}

The comparison between Proposition~\ref{prop:upper bound} in the previous example raises the following questions. 
\begin{question}
\label{q:maxasymptotics}
Let $\kappa$ denote the density of black squares in an $n \times n$ crossword grid. For each density $\kappa$, as $G$ ranges over all $n \times n$ grids with black square density $\kappa$, does $\displaystyle \lim_{n\to\infty} n^{-2}\ln \left(\max_G |\RP(G)|\right)$ exist? When it exists, what is its value?

Which $n \times n$ grid $G$ maximizes the number of rook placements? How many rook placements does this $G$ admit, and what is the density of black squares in $G$?
\end{question}

In the remainder of the paper, we turn our attention from general grids to \emph{sparse} grids.

\section{Sparse grids and permutation grids}
\label{sec:permgrids}

In this section, we consider crossword grids in which no two black squares share an edge.  We call such a crossword grid \emph{sparse}.  We first show a surprising connection between rook placements on sparse crossword grids and \emph{alternating sign matrices}.  Afterwards, we focus on the natural family of \emph{permutation grids}, which are sparse grids with exactly one black square in each row and column.  We completely characterize the permutations whose grids admit the minimum number of rook placements (a subset of the set of \emph{skew-merged permutations}) and formulate a conjecture about the permutations whose grids admit the maximum number of placements.

\subsection{Sparse grids and alternating sign matrices}
\label{subsec:asms}

An \emph{alternating sign matrix} (ASM) is a $\{0, +1, -1\}$-matrix in which every row and column sums to $1$ and the nonzero entries in each row and column alternate in sign.  Alternating sign matrices arise naturally in algebra, statistical mechanics, and representation theory, and admit bijections with a rich variety of combinatorial structures \cite{eklp92, kup02, mrr83, zei96, LascouxSchutzenberger, StrikerASMpolytope, FK1, FK2}; for broad overviews at various levels of detail, see \cite{Bressoud, pro01, StrikerSurvey}.

\begin{figure}[h!]
\ytableausetup{boxsize=1.1em}
\[
  \raisebox{1.8\height}{\begin{ytableau}
*(white)&\Rook&&&  \\
\Rook&*(black)&&\Rook&\\
&\Rook&&*(black)&\Rook\\
&&&\Rook& \\
&&\Rook&&
\end{ytableau}}
\longleftrightarrow
\begin{bmatrix}
    0 & 1 & 0 & 0 & 0 \\ 
    1 & -1 & 0 & 1 & 0 \\
    0 & 1 & 0 & -1 & 1 \\
    0 & 0 & 0 & 1 & 0 \\
    0 & 0 & 1 & 0 & 0
\end{bmatrix}
\hspace{24pt}
  \raisebox{1.25\height}{\begin{ytableau}
*(white)\Rook & & *(black) &\Rook\\
*(black)&&\Rook&\\
&\Rook&&*(black)\\
\Rook&*(black)&&\Rook
\end{ytableau}}
\longleftrightarrow
\begin{bmatrix}
0 & 0 & 0 & 1 & 0 & 0 \\
0 & 1 & 0 & -1 & 1 & 0 \\
1 & -1 & 0 & 1 & 0 & 0 \\
0 & 0 & 1 & 0 & -1 & 1 \\
0 & 1 & -1 & 0 & 1 & 0 \\
0 & 0 & 1 & 0 & 0 & 0
\end{bmatrix}
\]
    \caption{Rook placements on sparse crossword grids, and their corresponding ASMs.}
    \label{fig:asms}
\end{figure}

It turns out that rook placements on sparse grids are nearly the same object as ASMs.  We make this precise in the following statement, which is illustrated in the left half of Figure~\ref{fig:asms}.

\begin{proposition}
\label{prop:ASMs}
Suppose that $G$ is a sparse $n \times n$ crossword grid with no black squares on the edges.  Then there is a bijection between rook placements on $G$ and $n \times n$ alternating sign matrices with $-1$s whose positions are the same as the positions of the black squares in $G$.
\end{proposition}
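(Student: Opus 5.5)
The map is the obvious one: given a rook placement $R$ on $G$, form the $n\times n$ matrix $A_R$ with $+1$ at each rook, $-1$ at each black square, and $0$ elsewhere. We will show that $A_R$ is an ASM, and that every ASM whose $-1$ entries are exactly at the black squares arises from a unique $R$. Injectivity is immediate, since $R$ is recovered as the set of $+1$ entries.

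For the forward direction, fix a row $i$. Since $G$ is sparse and has no black squares on the edges, the black squares in row $i$ lie in columns $c_1<\dots<c_k$ with $c_1>1$, $c_k<n$, and $c_{t+1}-c_t\ge 2$. Sparseness forbids horizontally adjacent black squares, so each gap between consecutive black squares, as well as the stretches before $c_1$ and after $c_k$, is nonempty. Hence row $i$ consists of exactly $k+1$ nonempty across words, interleaved with the $k$ black squares. Completeness of $R$ puts exactly one rook in each across word. Reading row $i$ of $A_R$ left to right therefore gives $+1,-1,+1,\dots,-1,+1$, which alternates and sums to $(k+1)-k=1$. The same argument applies to columns via down words, using that no two black squares are vertically adjacent and none lie in the edge rows. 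So $A_R$ is an ASM whose $-1$ entries are exactly at the black squares.

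For the converse, take an ASM $A$ whose $-1$ entries sit exactly at the black squares of $G$, and let $R$ be its set of $+1$ entries. Each $+1$ is at a position that is not a $-1$, hence white, so $R$ consists of white squares. In row $i$, the nonzero entries alternate, the row sums to $1$, and the $-1$s are at $c_1<\dots<c_k$. A standard observation then forces the nonzero entries to be $+1,-1,\dots,-1,+1$: the first and last nonzero entries must be $+1$. So there is exactly one $+1$ strictly between consecutive $-1$s, one before $c_1$, and one after $c_k$. These segments are precisely the across words of row $i$, so each across word contains exactly one rook. The same holds for down words in each column, so $R$ is a complete non-attacking rook placement, and $A_R=A$. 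The two maps are therefore mutually inverse.

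The only point needing care is the identification of the across words in a row with the intervals between consecutive $-1$s, together with the two end intervals, including the fact that none of these is empty. This is exactly where both hypotheses enter: sparseness rules out empty interior segments, and the absence of black edge squares rules out empty end segments. Without these, a complete placement would have no rook in some interval and the row sum would drop below $1$. Beyond that, the argument is a routine alternating-sum check.
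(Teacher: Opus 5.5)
Your proof is correct and is essentially the paper's argument: the same identification of $+1$ entries with rooks and $-1$ entries with black squares, with the alternating-sign condition in each row and column matching ``exactly one rook in each segment between consecutive black squares or the grid edge.'' You run the map in the opposite direction from the paper and write out both directions. You also make explicit, where the paper does not, that sparseness and the absence of black edge squares are what ensure each such segment is a nonempty word.
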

\begin{proof}
Given an ASM $A$, construct a crossword grid of the same size by replacing every $-1$ with a black square and every $0$ and $+1$ with a white square.  Place rooks on this grid at the positions of the $+1$s in $A$.  By the definition of ASMs, there is exactly one rook between each pair of consecutive black squares in every row and column, and exactly one rook north of the northernmost black square in each column (respectively south of the southernmost, west of the westernmost in each row, and east of the easternmost in each row).  Thus this placement of rooks is nonattacking and complete.  The inverse map is equally straightforward to describe.
\end{proof}

\begin{remark}
\label{rem:asm-perm}
One can extend Proposition~\ref{prop:ASMs} to also give correspondences between ASMs and rook placements on other families of sparse grids.  For example, if $G$ is an $n \times n$ permutation grid (or, more generally, a sparse grid with exactly one black square on each of the four edges), then there is a map from rook placements on $G$ to $(n + 2) \times (n + 2)$ ASMs that comes from padding $G$ with new edge rows and columns containing no black squares; the position of the $+1$ in each of the edges is forced by the $-1$.  (This is illustrated in the right half of Figure~\ref{fig:asms}.)

For sparse grids with multiple black squares on the edges, one could pad by multiple new edge rows that contain no black squares; in general, however, this will not give a bijective relationship with ASMs.
\end{remark}

The number of $n \times n$ ASMs has a beautiful product formula, and many interesting subclasses of ASMs have also been enumerated.  However, as far as the present authors are aware, essentially nothing is known about the enumeration of ASMs according to the positions of their $-1$s. 
In the rest of this section, although we drop the language of ASMs, our results can also be interpreted (via Proposition~\ref{prop:ASMs} and Remark~\ref{rem:asm-perm}) as enumerations of certain classes of ASMs.

\subsection{Permutation grids}
\label{subsec:perm}

We now turn our focus specifically to permutation grids, that is, grids with exactly one black square in each row and column.
We adopt the convention that if a permutation matrix has a $1$ in position $(i,j)$ then the corresponding permutation maps $i$ to $j$. Equivalently, for a permutation $w \in S_n$, we define the crossword grid $\Grid{w}$ to be the grid with black squares in $(i, w(i))$ for all $i \in [n]$.

It is easy to see that if an $n \times n$ permutation grid admits a rook placement, then this rook placement will have $2n - 2$ rooks, since each black square other than those in the first and last columns divides its row into two across words (its \emph{west} and \emph{east across word}), and likewise each black square other than those in the first and last rows divides its column into two down words.
Next, we show that every permutation grid admits at least one rook placement.

\begin{proposition}\label{prop:existence}
    Every $n \times n$ permutation grid admits a rook placement.
\end{proposition}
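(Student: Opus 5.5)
The plan is to prove Proposition~\ref{prop:existence} by induction on $n$, inserting one row and one column at a time and repairing the placement by short alternating (augmenting) paths in $\Graph{\Grid{w}}$. The cases $n\le 2$ can be checked by hand. For $w\in S_n$, let $(n,j)$ be the black square in the last row. Deleting row $n$ and column $j$ gives $\Grid{w'}$ for some $w'\in S_{n-1}$, and by induction $\Grid{w'}$ has a rook placement $R'$.

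Re-inserting column $j$ lengthens every across word of $\Grid{w'}$ but creates no new across word, since no black square of rows $1,\dots,n-1$ lies in column $j$. Re-inserting row $n$ lengthens the down words in columns $c\ne j$ that reach the bottom edge. The only new words are:
\begin{itemize}
\item the west and east across words of row $n$ (only one of them if $j\in\{1,n\}$);
\item the single down word of column $j$ in rows $1,\dots,n-1$.
\end{itemize}
So, with the rooks of $R'$ kept in place (shifted appropriately), we must cover two new across words and one new down word while adding only two rooks. This matches the count $2n-2$ versus $2(n-1)-2$. The new rooks in row $n$ cannot sit in column $j$, so column $j$ has to be covered by moving an existing rook.

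Concretely, I would look for an exchange of the following form. Choose a rook $(r,c)\in R'$ such that no black square lies strictly between columns $c$ and $j$ in row $r$, and move it to $(r,j)$. This keeps its across word and now covers column $j$. The down word of column $c$ that contained $(r,c)$ is now uncovered. If no black square lies between rows $r$ and $n$ in column $c$, that word reaches row $n$, and we can add a rook at $(n,c)$, which covers the row-$n$ across word on the same side of $j$ as $c$. A second exchange of the same kind, on the other side of $j$, covers the remaining row-$n$ word. To find suitable rooks I would use the structure of permutation grids. The black squares in row $n-1$ and in the columns adjacent to $j$ separate the grid into few regions. The rooks forced into the bottom segments of columns near $j$, namely those in the south down words of black squares lying just above row $n$, are natural candidates.

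The main obstacle is guaranteeing that such a short exchange always exists. The required rook may be blocked by a black square in its row between $c$ and $j$, or in its column between $r$ and $n$. I expect to need longer alternating paths, in which several rooks shift in a chain. If the direct exchange argument becomes unwieldy, the fallback is to verify Hall's condition directly. For a set $S$ of across words, one shows that the down words meeting $S$ number at least $|S|$. The argument would exploit that each row and each column contains exactly one black square, so a set of across words confined to few columns must contain words from many rows, and these rows hit many distinct down words. Either way, the combinatorial core is controlling how black squares block moves, and that is where I would concentrate the effort.
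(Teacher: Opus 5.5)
There is a genuine gap, and it starts with your count of new words. Re-inserting row $n$ does more than lengthen down words. The black square $(n-1,w(n-1))$ lies on the bottom edge of $\Grid{w'}$, so in $\Grid{w}$ a new one-square down word $\{(n,w(n-1))\}$ appears beneath it. Likewise, when $j\in\{1,n\}$, re-inserting the edge column $j$ does create a new across word: the single square $(w^{-1}(2),1)$ (resp.\ $(w^{-1}(n-1),n)$). So there are always two new across words and two new down words. Your count of two across and one down cannot be right, because both $\Grid{w'}$ and $\Grid{w}$ have equally many across and down words.

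Because of this miscount, your scheme fails as written. The part of column $j$ above $(n,j)$ is a single down word, so a ``second exchange of the same kind'' would put two rooks in it, and the new word $\{(n,w(n-1))\}$ would stay uncovered. Beyond that, you never show that a suitable exchange exists. Deferring to ``longer alternating paths'' or an unproved Hall-type argument leaves the actual content of the proposition open.

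With the correct count, your approach works, and only one exchange is needed. Set $c_0=w(n-1)$ and add a rook at $(n,c_0)$; it covers the new singleton down word and the row-$n$ across word on $c_0$'s side of $j$.
\begin{itemize}
\item If $j\in\{1,n\}$, also place a rook on the new singleton across word in column $j$, and you are done.
\item Otherwise, say $c_0<j$; the case $c_0>j$ is symmetric. The black square of column $j+1$ is in neither row $n$ nor row $n-1$, so it lies in a row at most $n-2$. Hence the south down word of column $j+1$ in $\Grid{w'}$ is nonempty, reaches the bottom edge, and carries a rook at some $(r,j+1)$. Shift that rook to $(r,j)$, which is in the same across word and now covers column $j$. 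Then add a rook at $(n,j+1)$, which covers the east across word of row $n$ and the vacated down word.
\end{itemize}
This is the paper's proof up to transposition. The paper deletes the last column and the row $i$ of its black square. It handles the corner cases by adding two rooks, and in the interior cases it moves the rook in the east across word of row $i+1$, the row adjacent to the deleted one.
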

\begin{proof}
    The result is clearly true for $n  = 1$.  Suppose it is true for $n = k$, and consider a $(k + 1) \times (k + 1)$ grid $G$ with exactly one black square in every row and column. In particular, there must be a black square in the easternmost column, say in position $(i, k + 1)$.  Delete column $k + 1$ and row $i$ from $G$; the result is a $k \times k$ grid $G'$ with exactly one black square in each row and column.  By the inductive hypothesis, $G'$ admits a rook placement, using exactly $2k - 2$ rooks. Use this placement to construct a (nonattacking, but incomplete) placement of $2k - 2$ rooks on $G$ in the obvious way (in rows $1, \ldots, i - 1, i + 1, \ldots, k + 1$ and columns $1, \ldots, k$).  We have four cases:

    First, if $i = k + 1$, choose $j, j'$ so that there are black squares in positions $(k, j)$ and $(j', k)$. We illustrate an example with these two black squares highlighted in red.
\[
\begin{ytableau}
*(white)\Rook&&&& *(color1)\blacksquarenew &*(color3) \WRook  \\
*(black)&&\Rook&&&*(lc2)\\
&\Rook&*(black)&\Rook&&*(lc2)\\
&&\Rook&*(black)&\Rook&*(lc2)\\
\Rook&*(color1)\blacksquarenew &&\Rook&&*(lc2)\\
*(lc2)&*(color3)\WRook &*(lc2)&*(lc2)&*(lc2)&*(black)
\end{ytableau}
\]
Then in $G$, there are two down words and two across words missing rooks (the across word to the east of $(j', k)$, the down word to the south of $(k,j)$, the across word to the west of $(k+1, k +1)$, and the down word to the north of $(k+1, k+1)$). We can simply add rooks in positions $(k + 1, j)$ and $(j', k + 1)$ (highlighted in blue in our illustration) to complete the rook placement on $G$.

    Second, if $i = 1$ we have the mirror image of the previous case.

    Third, if $1 < i < k + 1$ and the black square in column $k$ is in position $(j, k)$ with $j < i$, then do the following: consider the east across word in row $i + 1$.  Since $j \neq i + 1$, this word has length larger than $1$ and has a rook in position $(i + 1, j')$ for some $j'$ (highlighted in orange in our illustration).
\[
\begin{ytableau}
*(white)\Rook&&&& *(black)&*(color3)\WRook   \\
*(black)&&\Rook&&&*(lc2)\\
*(lc2)&*(lc2) &*(lc2)&*(color3)\WRook&*(lc2)&*(black)\\
&\Rook&*(black)&*(orange)\Rook&&*(color3) \WRook\\
&&\Rook&*(black)&\Rook&*(lc2)\\
\Rook&*(black)&&\Rook&&*(lc2)
\end{ytableau}
\] 
Remove this rook; then add rooks in positions $(i, j')$, $(i+1, k + 1)$, and $(j, k + 1)$ (highlighted in blue in our illustration).  This gives $2k = 2(k+1) - 2$ rooks for a rook placement on $G$.

Finally, the last case (with $1 < i < k + 1$ and a black square in position $(j, k)$ with $j > i$) is the mirror image of the third case.
\end{proof}

\begin{remark}
\label{rem.injection}
    The proof of the previous proposition actually gives an injection from rook placements on the $k\times k$ grid $G'$ to rook placements on the $(k + 1)\times(k + 1)$ grid $G$. In particular, if a $(k+1) \times (k+1)$ grid admits $i$ rook placements, then the $k \times k$ grid obtained by deleting the easternmost black square's row and column admits no more than $i$ rook placements.
\end{remark}

Proposition~\ref{prop:existence} can be generalized slightly, to the family of \emph{partial permutation grids}, which contain at most one black square in each row and column.  (The second example in \eqref{eq:no rook placements} shows that it cannot be extended to all sparse grids.)

\begin{corollary}
\label{cor:partialperm}
    Suppose $G$ is an $n \times n$ partial permutation grid. Then $G$ admits a rook placement if and only if the number of across words in $G$ is equal to the number of down words in $G$, if and only if the total number of black squares in columns $1$ and $n$ is equal to the total number of black squares in rows $1$ and $n$.
\end{corollary}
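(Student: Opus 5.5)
We will prove the two equivalences separately, starting with the purely combinatorial second one.

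\emph{Counting words.} In a partial permutation grid with $b$ black squares, each row with no black square contributes one across word. A row whose black square is in column $1$ or $n$ also contributes one across word. A row whose black square is interior contributes two. So the number of across words is $n + b - c$, where $c$ is the number of black squares in columns $1$ and $n$. Symmetrically, the number of down words is $n + b - r$, where $r$ is the number of black squares in rows $1$ and $n$. These counts are equal if and only if $c = r$, which proves the second equivalence. The first equivalence has an obvious forward direction: a complete rook placement is a perfect matching of $\Graph{G}$, so the two sides have equal size.

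\emph{Reverse direction: reduction to a permutation grid.} Suppose $G$ has equally many across and down words. I would embed $G$ in a genuine permutation grid and invoke Proposition~\ref{prop:existence}. Let $R$ be the set of rows of $G$ with no black square and $C$ the set of empty columns; $|R| = |C| = n-b$.
\begin{enumerate}
\item Add $n-b$ new rows and $n-b$ new columns to obtain a $(2n-b)\times(2n-b)$ grid $H$.
\item In each new row, place one black square in a column of $C$, and in each new column, place one black square in a row of $R$, using bijections.
\item In the new-row/new-column block, place no black squares.
\end{enumerate}
Then $H$ is a permutation grid, so it has a rook placement by Proposition~\ref{prop:existence}, and we want to restrict that placement to $G$.

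The restriction does not work directly, because words of $G$ are not words of $H$: the added black squares cut them. A more robust route is to induct on $n - b$, mimicking the proof of Proposition~\ref{prop:existence} instead of embedding. The base case $b = n$ is Proposition~\ref{prop:existence}. For the inductive step, we want to reduce $G$ to an $(n-1)\times(n-1)$ partial permutation grid satisfying the same edge-count condition, and then extend the placement by adding one rook.

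\begin{enumerate}
\item \emph{An empty row and an empty column, both interior or both edge, such that the condition survives deletion.} Delete them. The resulting grid still satisfies $c = r$, although it has new edge rows and columns, so the counts must be rechecked. Adding back a full white row and column adds exactly one across and one down word, meeting at a white square; place the extra rook there. Every other word of $G$ is the corresponding word of the smaller grid with one square inserted, so the placement extends.
\item \emph{Otherwise, delete the east column and the row of its black square,} as in Proposition~\ref{prop:existence}.
\end{enumerate}
Even the empty-line deletion in step 1 is delicate: deleting edge row $1$ makes row $2$ an edge row, which changes $r$.

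The main obstacle is therefore this edge-effect bookkeeping. I expect to handle it by choosing which empty row and column to delete. If an empty interior row and an empty interior column both exist, deleting them changes neither $c$ nor $r$, and the inductive step is clean. If every empty row or every empty column lies on the edge, the situation is constrained. Say all empty rows are edge rows, so there are at most two empty rows and hence $n - b \le 2$. These few cases I would finish by direct construction: pair an empty edge row with an empty column, or adapt the four-case argument of Proposition~\ref{prop:existence}. The hypothesis $c = r$ forces an empty column at a matching edge position.
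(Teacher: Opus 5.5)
Your word count, the forward direction, and the inductive step for an \emph{interior} all-white row and column are all correct. Re-inserting such a pair adds exactly one across word and one down word, and a single rook at their intersection extends the placement.

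The gap is in the edge case, which is where the real content of the converse lies. Your step~1 claims the same one-rook extension when the deleted row and column are edge lines, and that fails. Suppose you re-insert an all-white edge row and edge column next to a grid whose adjacent edge row and column each contain a black square. This creates four new words: the new row, the new column, the new square next to the black square of the old edge row (a length-one down word), and the new square next to the black square of the old edge column (a length-one across word). For example, take the $3\times 3$ grid with one black square at $(2,2)$. Deleting row $1$ and column $1$ leaves a grid whose unique placement corresponds to rooks at $(2,3)$ and $(3,2)$. Adding one rook at $(1,1)$ still leaves the words $\{(2,1)\}$ and $\{(1,2)\}$ empty. You then defer exactly this residual situation (all empty lines on the edge, $n-b\le 2$) to an unspecified ``direct construction'' or an adaptation of the four cases of Proposition~\ref{prop:existence}. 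So the reverse implication is not actually established.

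The missing observation is short, and supplying it essentially recovers the paper's proof. That proof is a direct construction with no induction on $n-b$.
\begin{enumerate}
\item Each edge line has at most one black square, so $c=r$ says precisely that there are as many all-white edge rows as all-white edge columns. They need not be in matching positions: row $1$ may pair with column $n$.
\item Delete every all-white row and column. What remains is a permutation grid; take a placement on it from Proposition~\ref{prop:existence}.
\item Re-insert each all-white edge row together with an all-white edge column, and put rooks in the two new length-one words. The first lies in the new row and the second in the new column. So these two rooks cover all four new words and lie in no old word, while every old word is unchanged or merely lengthened.
\item Only after the edge pairs, re-insert the interior all-white rows and columns. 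Each adds a single word, so place rooks at the intersections of any bijection between them. This last part is your interior step.
\end{enumerate}
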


\begin{proof}
    First note that since $G$ is a partial permutation grid, it is sparse. If we begin with an empty $n \times n$ grid and add  black squares one at a time until we reach $G$, then each black square in the interior of the grid increases the number of across words and the number of down words by one. On the other hand, each black square in column $1$ and $n$ adds a down word but does not add an across word. Therefore, the condition that the number of across words is equal to the number of down words is equivalent to the condition that the total number of black squares in columns $1$ and $n$ equals to the total number of black squares in rows $1$ and $n$.
    In order for $G$ to admit a rook placement, it is clearly necessary that the number of across words is equal to the number of down words. Hence, we need only show the converse.

    Since there is at most one black square in every row and column of $G$, the number of rows containing no black squares is equal to the number of columns containing no black squares. Call such a row or column \emph{all-white}. Ignoring these all-white rows and columns yields a subgrid which is a permutation matrix, and which, by Proposition~\ref{prop:existence}, admits a rook placement. Place rooks in the corresponding squares on $G$.

    Since $G$ has the same number of black squares in columns $1$ and $n$ as in rows $1$ and $n$, we see that the number of all-white edge rows is equal to the number of all-white edge columns.

    Now add rooks to the all-white rows and columns as follows. If there is an all-white edge row, then there is a corresponding all-white edge column. Add these rows to the permutation submatrix and place a rook in the two new length-one words that appear. If the grid has an additional all-white edge row and column, also add these  rows to the previous submatrix and place two more rooks in the new length-one words. Now every remaining all-white row and all-white column appears in the interior of the previous submatrix. Pair each row with a column and place a rook in the intersection. This gives a rook placement on $G$.
\end{proof}

\subsection{Permutation grids admitting small numbers of placements}
\label{subsec:smallnumber}

The existence proof above establishes that every permutation grid admits at least one rook placement.  This naturally raises the question of which permutation grids admit \emph{exactly} one rook placement (the minimum possible number).  We answer this question now, after a preparatory lemma.

\begin{lemma}
\label{lem.mutualrooks}
    Suppose that a non-attacking rook placement on a crossword grid has two rooks $R_1$ and $R_2$ that both attack the squares $x_1$ and $x_2$. Then removing the rooks $R_1$ and $R_2$ and placing rooks in $x_1$ and $x_2$ yields another non-attacking rook placement on the crossword grid.
\end{lemma}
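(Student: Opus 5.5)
The plan is to show first that the four words involved are forced to line up in a specific way, and then that the swap leaves every word's rook count unchanged.

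\emph{Step 1: identify the words.} Rooks $R_1$ and $R_2$ are non-attacking, so they lie in different across words and in different down words. Each $x_i$ is attacked by both rooks, so it lies in a word of $R_1$ and in a word of $R_2$. These cannot both be across words, or both be down words, since an across word of $R_1$ is not an across word of $R_2$, and likewise for down words. Also $x_i \neq R_1, R_2$: a rook does not attack its own square, and $R_1$ does not attack $R_2$.

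Hence each $x_i$ lies either in the across word of $R_1$ and the down word of $R_2$, or in the down word of $R_1$ and the across word of $R_2$. An across word and a down word meet in at most one square. So if $x_1 \neq x_2$, which is implicit in the statement, exactly one of them, say $x_1$, is the intersection of $A_1$ (the across word of $R_1$) with $D_2$ (the down word of $R_2$). The other, $x_2$, is the intersection of $D_1$ (the down word of $R_1$) with $A_2$ (the across word of $R_2$).

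\emph{Step 2: check the swap.} Remove $R_1, R_2$ and add $x_1, x_2$. The only words whose rook sets change are $A_1, A_2, D_1, D_2$:
\begin{itemize}
\item $A_1$ loses $R_1$ and gains $x_1$;
\item $D_2$ loses $R_2$ and gains $x_1$;
\item $D_1$ loses $R_1$ and gains $x_2$;
\item $A_2$ loses $R_2$ and gains $x_2$.
\end{itemize}
So every word contains the same number of rooks as before, which gives non-attacking. Since counts are preserved exactly, completeness is preserved as well if the original placement was complete.

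One should also note that $x_1, x_2$ are not already rooks. Each is attacked by $R_1$, so it shares a word with $R_1$ and cannot be a rook in a non-attacking placement.

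\emph{Main obstacle.} The only real subtlety is Step 1: ruling out degenerate configurations by showing the four words must pair up crosswise. This follows directly from the facts that an across word and a down word intersect in at most one square, and that the two rooks share no word.
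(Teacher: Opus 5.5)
Your proof is correct and is essentially the paper's argument. The paper passes to $\Graph{G}$, where $R_1, R_2, x_1, x_2$ are the four edges of a $K_{2,2}$ on the vertices $A_1, D_1, A_2, D_2$, and swaps the two matching edges for the other two; your Step~1 spells out why these squares really form a $K_{2,2}$ (namely $x_1 = A_1 \cap D_2$ and $x_2 = D_1 \cap A_2$), a verification the paper leaves implicit.
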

For example, in the grid below, the rooks in positions $(3,4)$ and $(4,2)$ (highlighted in orange) mutually attack the blue squares $(3,2)$ and $(4,4)$ and so removing those rooks and placing rooks on the blue squares yields another rook placement on the grid.
\[
\begin{ytableau}
*(white)\Rook&&&& *(black)& \Rook \\
*(black)&&\Rook&&&\\
&*(color3)&&*(orange)\Rook&&\\
&*(orange)\Rook&&*(color3)&&\\
\Rook&&&*(black)&\Rook&\\
&&&\Rook&&*(black)
\end{ytableau}
\]
\begin{proof}[Proof of Lemma~\ref{lem.mutualrooks}]
    The non-attacking rook placement on the crossword grid corresponds to a matching in the bipartite graph, where each rook corresponds to an edge in the matching. The squares that each rook attacks correspond to edges which share a vertex with the rook's edge. If two rooks mutually attack two squares, then this gives a copy of $K_{2,2}$ in the bipartite graph. Choosing the other two edges in the copy of $K_{2,2}$ yields another perfect matching and hence another non-attacking rook placement.
\end{proof}

We now characterize the permutation grids that admit a unique rook placement.

\begin{theorem}\label{thm:one placement}
Let $w \in S_n$. Then $\Grid{w}$ admits exactly one rook placement if and only if $w$ is the union of an increasing subsequence and decreasing subsequence that overlap.
\end{theorem}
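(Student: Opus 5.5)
The plan is to treat the two directions separately, using the translation of rook placements into perfect matchings of $\Graph{\Grid{w}}$. Throughout I use the symmetries of the square (rotations and reflections). They act on permutations by the usual dihedral operations. They carry permutation grids to permutation grids and preserve $|\RP|$. They also preserve the class of permutations that are the union of an overlapping increasing and decreasing subsequence: such a permutation is an "X" shape, and its two arms are at most swapped. So every statement proved for the east edge can be transported to the other three edges.

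\emph{Sufficiency.} Suppose $w$ is the union of an increasing and a decreasing subsequence sharing an entry $c$. The black squares then form an X centred at $c$, with four arms going NW, NE, SW, SE. I would first construct an explicit rook placement. For each black square on, say, the NW arm, put the rook of its east across word and of its south down word in squares determined by the neighbouring black squares on the arms. This is the pattern suggested by the cases in the proof of Proposition~\ref{prop:existence}.

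To prove this placement is the only one, I would induct on $n$ by peeling an extremal entry. The black square in column $n$ is the last entry of one of the eastern arms. Its west across word and the words cut off near it are very constrained: the X shape means the region between the two eastern arms is a "wedge" of white squares. I would show that some word of length one or two there has its rook forced. Deleting the forced rooks together with row $w^{-1}(n)$ and column $n$ should leave the grid of a smaller X-shaped permutation, whose placement is unique by induction. The case analysis mirrors the four cases of Proposition~\ref{prop:existence}, and I would check that in each case the extension from $G'$ to $G$ is forced, not just possible. That turns the injection of Remark~\ref{rem.injection} into a bijection for these grids.

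\emph{Necessity.} Suppose $w$ is not of this form. I would use the pattern characterization: skew-merged permutations are exactly those avoiding $3412$ and $2143$. Skew-merged permutations whose two subsequences cannot be made to overlap are characterized by one further local configuration, which I expect to be the patterns $3142$ and $2413$ occurring with some adjacency condition. The aim is then, given such a pattern occurrence, to exhibit two distinct placements. Remark~\ref{rem.injection} and the symmetries give a reduction step: if deleting an extremal (edge-column or edge-row) entry of $w$ leaves a permutation whose grid has at least two placements, then so does $\Grid{w}$. So I would first delete extremal entries not involved in the bad pattern until every extremal entry belongs to it. I would then take the unique-looking placement produced by Proposition~\ref{prop:existence} and find two rooks mutually attacking two white squares near the pattern occurrence. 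Lemma~\ref{lem.mutualrooks} then gives a second placement.

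The main obstacle is this last step. Reducing by extremal deletions alone need not reach a small pattern, since entries outside the pattern may be non-extremal at every stage. So I expect to need a direct argument that an occurrence of a bad configuration always produces a $K_{2,2}$ swap in Lemma~\ref{lem.mutualrooks}. If that fails, the fallback is to produce a longer alternating cycle in $\Graph{\Grid{w}}$ relative to the constructed placement, which also yields a second perfect matching. I would try the $K_{2,2}$ swap first, choosing the occurrence of the pattern with entries as close together as possible.
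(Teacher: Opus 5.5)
\textbf{Verdict: there is a genuine gap in each direction.} The necessity gap is the serious one.

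\textbf{Necessity.} Your argument stops exactly where the work is. You give no reason why an occurrence of $3412$, $2143$, or the ``non-overlapping'' $2413$/$3142$ configuration produces a swap as in Lemma~\ref{lem.mutualrooks} or a longer alternating cycle. As you note yourself, pruning extremal entries away from a pattern need not reach a small configuration.

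The missing idea is to use the theorem itself as the inductive hypothesis and to stop pruning after one step. Delete the entry $n$ to get $w' \in S_{n-1}$. If $w'$ is not a union of overlapping increasing and decreasing subsequences, then by induction $\Grid{w'}$ has at least two placements, and Remark~\ref{rem.injection} transfers them to $\Grid{w}$. So you may assume $w'$ has your X shape. This pins down all of $\Grid{w}$ except the row of $B_n$, and no pattern characterization is needed.

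The paper then proceeds as follows:
\begin{itemize}
\item It chooses the decomposition of $w'$ with the overlap $c$ as large as possible, and by symmetry assumes $B_n$ lies south of $B_c$.
\item Since $w$ itself is not X-shaped, some square of the SE arm lies south of $B_n$.
\item It splits into cases according to whether $B_{n-1}$ is in the NE or SE quadrant.
\item In each case the second placement comes either from splitting a \emph{different} inherited rook into the two squares it attacks beside $B_n$ (as in the proof of Proposition~\ref{prop:existence}), or from a $K_{2,2}$ swap via Lemma~\ref{lem.mutualrooks}.
\end{itemize}

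\textbf{Sufficiency.} Your induction needs every placement on $\Grid{w}$ to come from the placement on $\Grid{w'}$, i.e.\ surjectivity of the map in Remark~\ref{rem.injection}. You do not establish this. When $B_n$ is not in row $1$ or row $n$, erasing row $w^{-1}(n)$ and column $n$ from an arbitrary placement removes three rooks. That leaves one across word and one down word of $\Grid{w'}$ empty. You would have to show these two words always meet in a white square, and that is essentially the uniqueness statement itself. Local forcing near column $n$ does not obviously deliver it.

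The paper avoids induction here entirely:
\begin{itemize}
\item The black squares in columns $1,\dots,c$ are $B_c$ together with the NW and SW arms. Read from north to south, their west across words have lengths $0,1,\dots,c-1$ in unimodal order.
\item Unimodality means the column of a rook in the word of length $m$ contains no black square between that word and any longer one. So that rook attacks a square of every longer word in this family.
\item Starting from the forced rook in the length-$1$ word, induction on $m$ puts a rook immediately west of each of these black squares.
\item The rotated versions of the argument force rooks north, east and south of the appropriate arms.
\end{itemize}
This yields $2n-2$ distinct forced rooks, which is already a complete placement, so the placement is unique.
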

\begin{proof}
Any pair of a (strictly) increasing and a decreasing sequence can overlap in at most one element.  Therefore, if $w \in S_n$ is the union of an increasing sequence and decreasing sequence that overlap, there is an element $c \in [n]$ such that there is an increasing subsequence $a_1 < \dots < a_i < c < b_1 < \dots < b_j$ and decreasing subsequence $d_1 > \dots > d_k > c > e_1 > \dots > e_{\ell}$ of $w$ and $[n]$ is a disjoint union of the $a$'s, $b$'s, $d$'s, $e$'s, and $c$.  It is possible that there are multiple choices of this decomposition, but if so, we fix one once and for all.  It is also possible that some of the indices $i,j,k, \ell$ are equal to $0$. For $m \in [n]$, let $B_m$ denote the black square in column $m$. 

As a running example, we consider the permutation $5~1~3~4~2~6$ in $S_6$ with the increasing subsequence $1 < 3 < 4 < 6$ and the decreasing subsequence $5 > 4 > 2$, so that $c = 4$.  (We could have also chosen the decreasing subsequence $5 > 3 >2$.)  We illustrate the grid below with the overlapping square highlighted in red:
\[
\begin{ytableau}
*(white)&&&& *(black)&  \\
*(black)&&&&&\\
&&*(black)&&&\\
&&&*(color1)\blacksquarenew&&\\
&*(black)&&&&\\
&&&&&*(black)
\end{ytableau}
\]

In $\Grid{w}$, the black squares that occur to the west of $B_c$ are exactly $B_{a_1}, \dots, B_{a_i}$ and $B_{e_1}, \dots, B_{e_{\ell}}$, and we have listed these black squares in order from north to south. Therefore, the west across words of length less than or equal to $c - 1$ occur directly to the west of these black squares.
    
Now observe that from north to south, these west across words first increase monotonically in length and then decrease monotonically in length. Hence, any rook in the west across word of length $m$ with $1 \leq m \leq c - 1$ attacks a square in every west across word of length $M$ where $m < M \leq c - 1$. In any rook placement on this grid, there is a rook in the unique white square of the west across word of length $1$ (which occurs to the west of $B_2$). Since this rook attacks a square (the left square) in the west across word of length $2$, the rook in this across word must be in the easternmost square of this word. That is, there must be a rook directly to the west of $B_3$. Similarly, since these two rooks attack the west across word of length $3$, there must be a rook directly to the west of $B_4$. By induction, each of the black squares $B_{a_1}, \dots, B_{a_i}, B_{c}, B_{e_1}, \dots, B_{e_{\ell}}$ other than $B_1$ must have a rook directly to its west.  In our running example, this establishes that the following arrangement is forced in any rook placement:
\[
\begin{ytableau}
*(white)&&&& *(black)&  \\
*(black)&&&&&\\
&\Rook&*(black)&&&\\
&&\Rook&*(color1)\blacksquarenew&&\\
\Rook&*(black)&&&&\\
&&&&&*(black)
\end{ytableau}
\]

By a symmetric argument (rotating the grid by $90^\circ$ or $180^\circ$), in any rook placement there must be a rook directly to the east of the black squares $B_{d_1}, \dots, B_{d_k}, B_{c}, B_{b_1}, \dots, B_{b_j}$ other than $B_n$, directly to the north of the black squares $B_{a_1}, \dots, B_{a_i}, B_{c}, B_{d_1}, \dots, B_{d_k}$ other than the black square in row $1$, and directly to the south of the black squares $B_{b_1}, \dots B_{b_k}, B_{c}, B_{e_1}, \dots, B_{e_{\ell}}$ other than the black square in row $n$. Continuing with our running example, the following rooks are forced in any rook placement:
\[
\ytableausetup{boxsize=1em}
\begin{ytableau}
*(white) \Rook &&&& *(black)&\Rook  \\
*(black)&&\Rook&&&\\
&\Rook&*(black)&\Rook&&\\
&&\Rook&*(color1)\blacksquarenew&\Rook&\\
\Rook&*(black)&&\Rook&&\\
&\Rook&&&&*(black)
\end{ytableau}
\]

In general, this means that in any rook placement on $\Grid{w}$, there are four rooks placed adjacent to $B_c$ and for each of the remaining $n - 1$ black squares, two additional rooks placed adjacent those black squares (other than four rooks). It is easy to see from the structure of $w$ that these rooks are all different from each other, so altogether, this gives $4 + 2(n-1) - 4 = 2n - 2$ rooks, which is the total number of rooks in the rook placement. Hence, there is a unique rook placement on the grid.

For the converse, we proceed by induction on $n$. Suppose that for all $v \in S_{n-1}$, if $\Grid{v}$ admits a unique rook placement, then $v$ is the union of an increasing and decreasing subsequence which overlap.  
Suppose that $w \in S_n$ is \emph{not} the union of an increasing and decreasing subsequence that overlap; we will show that $\Grid{w}$ admits more than one rook placement. As above, for $m \in [n]$, let $B_m$ denote the black square in column $m$ of $\Grid{w}$.

Consider the permutation $w' \in S_{n - 1}$ whose one-line notation is the result of deleting $n$ from the one-line notation of $w$; in terms of the grid, $\Grid{w'}$ is obtained by deleting the $n$th column of $\Grid{w}$ and the row which contains $B_n$.  If $\Grid{w'}$ admits more than one rook placement, then by Remark~\ref{rem.injection}, $\Grid{w}$ admits more than one rook placement and we are done.  So, we may assume that $\Grid{w'}$ admits a unique rook placement. By the induction hypothesis, $w'$ is the union of an increasing and decreasing subsequence that overlap.  
Fix the choice of decomposition of $w'$ into subsequences such that their overlapping element $c$ is as large as possible. The overlapping black square $B_c$ divides $\Grid{w'}$ into quadrants which are NW, NE, SW, and SE of $B_c$. Each other black square lies in one of these quadrants; the decreasing subsequence corresponds to the black squares in the NE and SW quadrants while the increasing subsequence corresponds to the black squares in the NW and SE quadrants.

Without loss of generality, we may assume that $B_n$ occurs to the south of $B_c$. Note that $B_n$ cannot be south of the southernmost black square in the increasing subsequence of $w'$, as otherwise $w$ would be the union of an increasing and decreasing subsequence which overlap, and this would contradict our assumption. Hence, there must be at least one black square in the SE quadrant which is south of $B_n$.

 In any rook placement on $\Grid{w}$, there must be a rook directly to the east of $B_{n-1}$. We now divide into two cases, depending on whether $B_{n - 1}$ occurs in the NE or SE quadrant.

\textbf{Case 1:} Suppose that $B_{n-1}$ occurs in the NE quadrant. As in the proof of Proposition~\ref{prop:existence}, each black square in the SE quadrant which occurs south of $B_n$ will give a rook placement on $\Grid{w}$ by extending from the unique rook placement on $\Grid{w'}$; indeed, in the (not complete) rook placement on $\Grid{w}$ inherited from the placement on $\Grid{w'}$, each such black square has a rook directly to its east that attacks both a square in the across word to the west of $B_n$ and a square in the down word to the south of $B_n$. Hence, if there are at least two black squares in the SE quadrant which are south of $B_n$, we obtain at least two rook placements on $\Grid{w}$, and we are done.

Therefore, we may assume that there is exactly one black square, say $B_i$, to the SE of $B_c$ and south of $B_n$. Since $B_{n - 1}$ is north of $B_c$, we have $i \neq n-1$. In the unique rook placement on $\Grid{w'}$, the rook $R$ to the east of $B_i$ yields a rook placement on $\Grid{w}$, as in Proposition~\ref{prop:existence}, where we replace $R$ with rooks in the squares which $R$ attacks in the across word to the west of $B_n$ and the down word to the south of $B_n$. In this rook placement on $\Grid{w}$, let $R_1$ be the rook placed in the across word to the west of $B_n$. Consider the black square $B_j$ which is the second furthest south in the SE quadrant (this may be $B_c$). Let $R_2$ be the rook to the east of $B_j$. Then $R_1$ and $R_2$ mutually attack two squares. By Lemma~\ref{lem.mutualrooks}, $\Grid{w}$ therefore admits a second rook placement.

\textbf{Case 2:} Suppose that $B_{n-1}$ occurs in the SE quadrant. Consider the furthest south black square $B_j$ in the SE quadrant that is to the north of $B_{n}$ (this may be the overlapping square $B_c$). In the unique rook placement on $\Grid{w'}$, there is a rook $R$ directly to the east of $B_{j}$. In $\Grid{w}$, $R$ attacks both the words to the north and to the west of $B_n$, so one rook placement on $\Grid{w}$ arises by removing this rook and replacing it by two rooks in the squares it attacks in these words, as well as adding a rook immediately to the right of $B_{n - 1}$. We now show that there is always a second rook placement on $\Grid{w}$ by considering several cases, each of which involves identifying a small number of rooks that can be rearranged in the rook placement on $\Grid{w}$ inherited from the unique placement on $\Grid{w'}$ in order to give a second rook placement on $\Grid{w}$.

If $B_j$ does not occur in the row directly to the north of $B_n$, then (in the rook placement inherited from $w'$) there is a rook $R'$ directly below $B_j$ which, in $\Grid{w}$, attacks the words to the north and to the west of $B_n$, and so replacing this rook by two rooks yields a second rook placement on $\Grid{w}$. Hence, we may assume that $B_j$ occurs in the row directly to the north of $B_n$.

If both the NE and SW quadrants are empty (other than the overlapping square $B_c$), then $w'$ is an increasing sequence (that is, just the identity permutation). Then $w$ can be written as the union of an increasing and decreasing sequence which overlap by choosing the $n, n-1$ to be the decreasing sequence and $1,2,\dots, n-2$ to be the increasing sequence. This contradicts our assumption on $w$, and so there must be a black square in either the NE or SW quadrant. 

\textbf{Case 2a:} Suppose there is a black square, say $B_k$, in the NE quadrant. In this case, in the unique rook placement on $\Grid{w'}$, there is a rook $S$ directly to the east of $B_k$. Necessarily, in the inherited placement on $\Grid{w}$, $S$ will attack the words directly to the north and directly to the west of $B_n$, so replacing $S$ by these two rooks yields a second rook placement on $\Grid{w}$.

\textbf{Case 2b:} Suppose there are no black squares in the NE quadrant. Then there is a black square, say $B_k$, in the SW quadrant. Let $B_{\ell}$ denote the northernmost black square which is south of $B_n$ in the SE quadrant. We claim that some square in the SW quadrant must occur to the north of $B_{\ell}$. If not, we can write $w$ as a union of an overlapping increasing and decreasing sequence by choosing the decreasing sequence to be $n$, $\ell$, and the black squares in the SW quadrant, contradicting our hypothesis on $w$. Hence, we can take $B_k$ to be north of $B_{\ell}$; in fact, we can take $B_k$ to occur in the row directly to the south of $B_n$. 

Recall that $B_j$ is a black square in the SW quadrant which occurs in the row directly to the north of $B_n$, and that we have already identified one rook placement on $\Grid{w}$ which arises from replacing the rook $R$ that is immediately to the east of $B_j$ in the rook placement inherited from $\Grid{w'}$ with rooks $R_1$ and $R_2$ in the squares that it attacks in the word to the west and to the north of $B_n$, respectively.  In this rook placement, there is also a rook $R'$ two squares to the south of $B_j$ (inherited from the rook directly below $B_j$ in the placement on $\Grid{w'}$). Then $R_1$ and $R'$ mutually attack two squares, and so we obtain a second rook placement on $\Grid{w}$ by Lemma~\ref{lem.mutualrooks}.

The list of cases above is exhaustive, and in all cases we find that if $w$ is not the union of an increasing and decreasing subsequence that overlap, then $\Grid{w}$ admits at least two rook placements.  This completes the proof.
\end{proof}

The permutations that appear in Theorem~\ref{thm:one placement} are an interesting class of permutations in their own right.  As observed the proof of \cite[Lemma~9]{atk98}, they are precisely the permutations whose image under the Robinson--Schensted correspondence is of hook shape, and consequently there are $\binom{2(n - 1)}{n - 1}$ of these in $S_n$.  (For more background on this enumeration, see \cite{strehl98}.)  These permutations belong to the \emph{skew-merged permutations}, namely, those that can be partitioned into an increasing and decreasing subsequence \cite{atk98, sta94}, or equivalently those that avoid the two patterns $3~4~1~2$ and $2~1~4~3$ \cite[Theorem 2.9]{sta94}. It turns out that the other skew-merged permutations are also interesting from our point of view, as we describe in the next proposition and conjecture.

\begin{proposition}
\label{prop:skewmerged}
If a permutation $w \in S_n$ is skew-merged, then $\Grid{w}$ admits exactly one or two rook placements.
\end{proposition}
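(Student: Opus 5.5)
By Theorem~\ref{thm:one placement}, it suffices to treat a skew-merged $w$ that is \emph{not} the union of an overlapping increasing and decreasing subsequence, and to show that $\Grid{w}$ then admits exactly two rook placements. Existence of at least one placement is Proposition~\ref{prop:existence}, and the converse direction of Theorem~\ref{thm:one placement} gives at least two. So the real content is the upper bound of two, and I would prove it with a forcing argument modeled on the first half of the proof of Theorem~\ref{thm:one placement}.

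\textbf{Setting up the decomposition.} Fix a partition of $w$ into an increasing subsequence $I$ and a decreasing subsequence $D$. Since they do not overlap, the black squares of $I$ and $D$ are disjoint. Their relative position is constrained because $w$ avoids $3~4~1~2$ and $2~1~4~3$. The standard description of skew-merged permutations (as in \cite{atk98}) says the plane splits into four regions NW, NE, SW, SE around a ``center.'' The $I$-squares lie in NW and SE, the $D$-squares lie in NE and SW, and the center is either empty or a single position not occupied by a black square. I would pick the decomposition so that this center is a $2\times 2$ window (two adjacent rows and two adjacent columns) containing no black square, or handle the degenerate placements of the window separately.

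\textbf{Forcing.} Now run the propagation argument from the proof of Theorem~\ref{thm:one placement}. The west across words to the west of the squares of $I\cup D$ that lie west of the center, read north to south, are unimodal in length. Starting from the length-one word, this forces a rook directly west of each such black square, except that the propagation cannot pass through the center. Doing the same from the other three sides forces rooks directly east, north, and south of the appropriate black squares. Counting as in Theorem~\ref{thm:one placement}, I expect the forced rooks to number $2n-4$. The remaining two rooks must then lie in a residual configuration around the center: two across words and two down words whose available squares, after removing attacked squares, form exactly a $2\times 2$ block. That block has precisely two perfect matchings, which gives $|\RP(\Grid{w})|\le 2$.

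\textbf{Main obstacle.} The hard step is the bookkeeping for the forced rooks. Without an overlapping element $c$, the unimodal chains on the four sides no longer meet at a common black square. One must check three things: the four propagations are each valid, the forced rooks are pairwise distinct, and exactly one residual $2\times2$ block remains. To handle this, I would first try to reduce to the overlapping case. Insert a new black square at the center, which yields an overlapping skew-merged permutation in $S_{n+1}$ with a unique placement. Then compare the two grids by deleting that row and column, which is the reverse of the construction in Proposition~\ref{prop:existence}. Each placement of $\Grid{w}$ should arise from the unique placement of the larger grid in at most two ways, mirroring the cases in the proof of Proposition~\ref{prop:existence}. If that comparison becomes too messy, I would fall back on the direct case analysis according to which of the four quadrants are empty.
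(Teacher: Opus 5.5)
Your outline matches the paper's proof. You reduce via Theorem~\ref{thm:one placement} to the non-overlapping case; your lower bound of two from Proposition~\ref{prop:existence} and Theorem~\ref{thm:one placement} is fine. You then force $2n-4$ rooks by the unimodal propagation and finish in a $2\times 2$ block. The gap is that the structural fact behind both the propagation and the count is assumed, not proved.

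You need indices $i,j$ such that columns $1,\dots,j$ contain exactly the NW and SW black squares, and rows $1,\dots,i$ contain exactly the NW and NE ones. Only then are the west, east, north and south families of words unimodal. Only then does the count $(j-1)+(n-j-1)+(i-1)+(n-i-1)=2n-4$ come out, with the four uncovered words meeting exactly in $\{i,i+1\}\times\{j,j+1\}$.

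You cannot ``pick the decomposition'' to arrange this. When $w$ is not an overlapping union, the center region of \cite{atk98} contains no black square, so every square is already forced into $I$ or $D$ and nothing is left to choose. (Nor is the center ``a single position'' in general.) The missing input, which the paper takes from \cite[Lemma~11]{atk98}, is the following: all four regions are nonempty, and the SE-most square of NW, the SW-most of NE, the NE-most of SW and the NW-most of SE form the pattern $2~4~1~3$ or $3~1~4~2$. The second case reduces to the first via $w\mapsto w^{-1}$. In the $2~4~1~3$ case, the NW and SE extremes lie in adjacent columns $j,j+1$, and the NE and SW extremes lie in adjacent rows $i,i+1$. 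Any black square strictly between would contradict extremality, together with monotonicity, in whichever region contains it. The same extremality gives the separation above. In particular there are no degenerate windows to handle separately.

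Once you have this cross, the rest is routine. Distinctness of the forced rooks follows from monotonicity inside each region. For example, the rook west of $X$ and the rook north of $Y$ coincide only if $Y$ is the SW diagonal neighbor of $X$; that forces both into NW, which is increasing. The bound is then immediate, since two across words and two down words admit at most two perfect matchings.

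Your fallback, however, would not work as described. Remark~\ref{rem.injection} applies only to deleting the easternmost black square, and it injects placements of the \emph{smaller} grid into the larger one. An analogous injection for your inserted center square would give $|\RP(\Grid{w})|\le 1$, contradicting the lower bound you already have. You also propose no map realizing ``arises in at most two ways.'' The unique placement on the enlarged grid tells you which $2n-4$ rooks to expect. Showing they are forced in $\Grid{w}$ still requires the propagation on $\Grid{w}$ itself, and hence the cross.
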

\begin{proof}
By Theorem~\ref{thm:one placement}, $\Grid{w}$ admits exactly one rook placement if and only if $w$ can be written as the union of an increasing and decreasing subsequence which overlap. Hence, it suffices to show that if $w$ can be written as the union of an increasing and decreasing subsequence which do not overlap, then $\Grid{w}$ admits exactly two rook placements.

By \cite[Lemma 3]{atk98}, if $w$ is a skew-merged permutation, then in $\Grid{w}$, every black square can be uniquely assigned to one of five regions in the grid: NW, NE, SW, SE, or center. Given two black squares in the NW region, the further east square is also further south. Similarly, in the NE region, further east squares are further north; in the SW region, further east squares are further north; and in the SE region, further east squares are further south.  The NW and SE regions contain the black squares which must be in the increasing subsequence in any partition of $w$ into an increasing and decreasing subsequence. Meanwhile the SW and NE regions contain the black squares which must be in the decreasing subsequence. The center region contains those squares that may be chosen to be in either the increasing or decreasing subsequence in a partition of $w$.

Suppose that $w$ can be written as the union of an increasing and decreasing subsequence which do not overlap. This means that in $\Grid{w}$ there are no black squares in the center region, and so every black square may be unambiguously assigned to the NW, NE, SW, or SE region. It follows from \cite[Lemma 11]{atk98} that all four regions are nonempty and if we consider the furthest southeast black square in the NW, the furthest southwest in the NE, the furthest northeast in the SW, and the furthest northwest in the SE, then these four black squares must form the permutation $2~4~1~3$ (in the order NW, NE, SW, SE) or the permutation $3~1~4~2$ (in the order NE, NW, SE, SW); this is illustrated in Figure~\ref{fig:skew-merged center}.  Refer to these four squares as $B_1,B_2,B_3,B_4$, where $B_i$ corresponds to $i$ in the permutation pattern.

\begin{figure}[ht]
\begin{center}
\begin{ytableau}
*(black)&&&&&& \\
&&*(color1)\blacksquarenew&&&&\\
&&&&&&*(black)\\
&&&&&*(color1)\blacksquarenew&\\
&*(color1)\blacksquarenew&&&&&\\
&&&*(color1)\blacksquarenew&&&\\
&&&&*(black)&&
\end{ytableau}
\hspace{24pt}
\begin{ytableau}
*(white)&&&&*(black)&& \\
&&&*(color1)\blacksquarenew&&&\\
*(black)&&&&&&\\
&*(color1)\blacksquarenew&&&&&\\
&&&&&*(color1)\blacksquarenew&\\
&&&&&&*(black)\\
&&*(color1)\blacksquarenew&&&&
\end{ytableau}
\end{center}
\caption{The skew-merged permutations $1~3~7~6~2~4~5$ and $5~4~1~2~6~7~3$ can be written in a unique way as the union of an increasing and decreasing subsequence, and these do not overlap.  The extreme squares in each of the four quadrants are highlighted in red.}
\label{fig:skew-merged center}
\end{figure}

The permutations that occur in the $3~1~4~2$ case are precisely the inverses of the permutations that occur in the $2~4~1~3$ case.  Since $\Grid{w}$ and $\Grid{w^{-1}}$ obviously admit the same rook placements (with the matrix transpose providing a bijection), it suffices to consider the  $2~4~1~3$ case.  In this case, by \cite[Lemma 11]{atk98}, there is a column $j$ such that that $B_2$ is in column $j$ and $B_3$ is in column $j + 1$. By a rotated version of the same lemma, there is also a row $i$ such that $B_4$ is in row $i$ and $B_1$ is in row $i + 1$. Thus, the vertical line separating columns $j$ and $j + 1$ and the horizontal line separating rows $i$ and $i + 1$ divide the square into precisely the four quadrants already mentioned.

By the same proof as in Theorem~\ref{thm:one placement}, in any rook placement on $\Grid{w}$, there must be rooks placed directly to the north of any black square in the NW or NE quadrants, directly to the south of any black square in the SW or SE quadrants, directly to the west of any black square in the NW or SW quadrants, and directly to the east of any black square in the NE or SE quadrants, as illustrated in Figure~\ref{fig:two placements}.
\begin{figure}[ht]
\begin{center}
\begin{ytableau}
*(black)&&\Rook&&&& \\
&\Rook&*(color1)\blacksquarenew&&&&\Rook\\
&&&&&\Rook&*(black)\\
&&&*(color3)\WRook&&*(color1)\blacksquarenew&\Rook\\
\Rook&*(color1)\blacksquarenew&*(color3)\WRook&&&&\\
&\Rook&&*(color1)\blacksquarenew&\Rook&&\\
&&&\Rook&*(black)&\Rook&
\end{ytableau}
\hspace{8pt}
\begin{ytableau}
*(black)&&\Rook&&&& \\
&\Rook&*(color1)\blacksquarenew&&&&\Rook\\
&&&&&\Rook&*(black)\\
&&*(color3)\WRook&&&*(color1)\blacksquarenew&\Rook\\
\Rook&*(color1)\blacksquarenew&&*(color3)\WRook&&&\\
&\Rook&&*(color1)\blacksquarenew&\Rook&&\\
&&&\Rook&*(black)&\Rook&
\end{ytableau}
\hspace{16pt}
\begin{ytableau}
*(white)&&&\Rook&*(black)&\Rook& \\
\Rook&&&*(color1)\blacksquarenew&\Rook&&\\
*(black)&\Rook&&&&&\\
\Rook&*(color1)\blacksquarenew&*(color3)\WRook&&&&\\
&&&*(color3)\WRook&&*(color1)\blacksquarenew&\Rook\\
&&&&&\Rook&*(black)\\
&\Rook&*(color1)\blacksquarenew&&&&\Rook
\end{ytableau}
\hspace{8pt}
\begin{ytableau}
*(white)&&&\Rook&*(black)&\Rook& \\
\Rook&&&*(color1)\blacksquarenew&\Rook&&\\
*(black)&\Rook&&&&&\\
\Rook&*(color1)\blacksquarenew&&*(color3)\WRook&&&\\
&&*(color3)\WRook&&&*(color1)\blacksquarenew&\Rook\\
&&&&&\Rook&*(black)\\
&\Rook&*(color1)\blacksquarenew&&&&\Rook
\end{ytableau}
\end{center}
\caption{For the permutations $w$ of Figure~\ref{fig:skew-merged center}, in every rook placement on $\Grid{w}$, the black rooks are forced, as in the proof of Theorem~\ref{thm:one placement}.  The white rooks on blue background are the two possible ways to complete each placement.}
\label{fig:two placements}
\end{figure}
This accounts for $2n - 4$ total rooks (two adjacent to each black square, with a ``missing'' rook along each edge).  Thus, every rook placement on the grid requires two additional rooks.  The words which do not contain any rooks are precisely the down to the south of $B_2$, the down to the north of $B_3$, the across to the east of $B_1$ and the across to the west of $B_4$. There are exactly two ways to complete this rook placement: with rooks in $(i,j)$ and $(i + 1, j +1)$ or with rooks in $(i, j+1)$ and $(i+1,j)$ (as illustrated in Figure~\ref{fig:two placements}). This completes the proof. 
\end{proof}

We conjecture that the converse to Proposition~\ref{prop:skewmerged} holds.

\begin{conj}
\label{conj:skewmerged}
    A permutation $w \in S_n$ is skew-merged if and only if $\Grid{w}$ admits exactly one or two rook placements.
\end{conj}

A consequence of Conjecture~\ref{conj:skewmerged} would be that the number of $n \times n$ permutation grids which admit exactly two rook placements is 
    \[
    \binom{2n}{n} - \binom{2n-2}{n-1} - \sum_{m=0}^{n-1} 2^{n-m-1}\binom{2m}{m}.
    \]

The same general proof strategy used in the proof of Theorem~\ref{thm:one placement} should also apply to Conjecture~\ref{conj:skewmerged}. One can again argue by induction on $n$, but the inductive step is more involved because the corresponding $(n-1)\times(n-1)$ instance may admit either one or two rook placements. As a result, the proof would require a more intricate case analysis. It would be interesting to find a proof of both Theorem~\ref{thm:one placement} and Conjecture~\ref{conj:skewmerged} that avoids such complicated case-by-case analysis.

Theorem~\ref{thm:one placement} and Proposition~\ref{prop:skewmerged} raise the question of which values $r$ can be achieved as the rook placement count of a permutation grid.   
Exhaustive computer computation for $n \leq 8$ shows that for all $r \leq 108$ other than the values $r = 4, 12,$ and all numbers congruent to 3 modulo 4, there exists a permutation $w$ such that $|\RP(\Grid{w})| = r$. We therefore conjecture the following.

\begin{conj}
\label{conj:rookcounts}
A positive integer $r$ occurs as the rook placement count of a permutation grid if and only if $r \neq 4$, $r \neq 12$, and $r \not \equiv 3 \pmod{4}$.
\end{conj}

It follows from Theorem~\ref{thm:one placement} that the set $P$ of permutations which admit exactly one rook placement is not a pattern class. For example, $2~5~3~1~4$ is in $P$ while $2~4~1~3$ is not in $P$.  Similarly, the permutations that admit at most $5$ rook placements are not a pattern class, because $2~6~3~1~5~4$ admits $5$ placements but it contains $2~5~1~4~3$ as a pattern, which admits $6$.  However, it appears (from the same exhaustive computations) that the permutations that admit at most $6$ rook placements may form a pattern class.

\begin{question}
\label{q:6exactlykgeneral}
Fix a positive integer $r$. Can the permutation grids which admit at most (or exactly) $r$ rook placements be characterized? When are they related to permutation pattern classes? 
\end{question}

As mentioned above, the permutations $w$ such that $\Grid{w}$ admits exactly one rook placement can be characterized by their shape under the Robinson--Schensted correspondence.  Casual observation suggests that permutations whose grids admit small numbers of rook placements may always have RS-shapes that are ``close to hooks'' in some sense.  This case leads us to pose the following question.
\begin{question}
    \label{q:RS}
    In general, is the number of rook placements on $\Grid{w}$ related to the Robinson--Schensted shape of $w$?
\end{question}

\subsection{Permutation grids admitting large numbers of placements}
\label{subsec:largenumbers}

The preceding sections we addressed permutation grids that admit small numbers of rook placements.  It is also natural to consider the opposite extreme.
\begin{question}
\label{q:8maxperm}
    Which $n \times n$ permutation grids admit the most rook placements? How many rook placements do they admit? How does this quantity behave asymptotically as $n \to \infty$?
\end{question}
Let $r(n) = \max_{w \in S_n} |\RP(\Grid{w})|$ denote the maximum number of rook placements over all $n \times n$ permutation grids. For $1 \leq n \leq 8$, exhaustive computer computation shows that the sequence $r(n)$ begins $1, 1, 1, 5, 17, 73, 469, 3493$. This sequence does not appear in the On-Line Encyclopedia of Integer Sequences. For these first eight cases, the permutation grid admitting the most rook placements is a block-diagonal permutation whose first and last blocks are anti-diagonal of the same size and whose central block is an identity matrix. In other words, they are layered permutations whose block-sizes are indexed by compositions $(m, 1, 1, \ldots, 1, m)$ for some $m$. For a layered permutation, we call the associated composition its \emph{shape}. 

\begin{conj}
\label{conj:maxlayered}
For each $n$, the $n \times n$ permutation grid admitting the most rook placements is given by $\Grid{w}$ where $w \in S_n$ is a layered permutation of shape $(m,1,1,\dots, 1,m)$ for some $m$.
\end{conj}

For layered permutations of shape $(m_1, 1, 1, \dots, 1, m_2)$, we can compute the number of rook placements on the permutation grid exactly. Recall that the \emph{Stirling number of the second kind} $S(a, b)$ is the number of ways to partition a set of $a$ objects into $b$ non-empty sets.

\begin{prop}
\label{prop:countlayered}
    Suppose $m_1, m_2 > 0$ and let $m = \min(m_1, m_2)$. For the layered permutation $w \in S_n$ of shape $(m_1, 1,1,\dots,1, m_2)$, the number of rook placements on $\Grid{w}$ is given by 
    \[
    \sum_{i = 0}^{m-1} S(m_1, m - i)S(m_2, m - i)\left((m-i)^{n - m_1 - m_2} \cdot (m-i)!\right)^2.
    \]
    In particular, if $m > 1$ then for the layered permutation in $S_n$ of shape $(m,1,1,\dots,1,m)$, the number of rook placements on $\Grid{w}$ is given by 
    \[
    \sum_{k = 1}^m \left[S(m, k) \cdot k^{n-2m} \cdot k!\right]^2.
    \]
\end{prop}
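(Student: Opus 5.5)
The plan is to decompose $\Grid{w}$ into regions whose words interact in a controlled way. Then I would set up a bijection between rook placements and tuples of combinatorial data whose count is visibly the stated sum. Write $p = n - m_1 - m_2$. The black squares are then:
\begin{itemize}
\item $(i, m_1+1-i)$ for $i \le m_1$, the first anti-diagonal block;
\item $(r,r)$ for $m_1 < r \le m_1+p$, the middle identity part;
\item an anti-diagonal in the final $m_2 \times m_2$ block.
\end{itemize}

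\textbf{Step 1: the forced staircases.} The west across words and north down words of the first block lie entirely in the staircase strictly NW of its anti-diagonal, with lengths $0,1,\dots,m_1-1$. These words meet only each other. By the forcing argument in the proof of Theorem~\ref{thm:one placement} (the length-$1$ word forces the next one, and so on), their rooks are uniquely determined. The same holds for the east/south words strictly SE of the last block's anti-diagonal. After removing these, the remaining unmatched words are of three kinds:
\begin{itemize}
\item the $m_1$ east words $E_1,\dots,E_{m_1}$ and $m_1$ south words $S_1,\dots,S_{m_1}$ emanating from the first block;
\item the $m_2$ west words and $m_2$ north words emanating from the last block;
\item for each middle black square $(r,r)$, its four words.
\end{itemize}
Each middle row's west word runs from column $1$ to $r-1$. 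So it meets every $S_c$ and the north words of earlier middle columns, and symmetrically on the other side.

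\textbf{Step 2: the combinatorial decomposition.} I expect the rooks placed inside the first block (on or SE of the anti-diagonal region, where $E_i$ meets $S_c$) to pair up some of the $E$'s with $S$'s. The claim to establish is that the pattern of which first-block words are ``used up inside the block'' versus ``passed out to the middle'' is encoded by a set partition of $[m_1]$ into $k$ blocks, and similarly for $[m_2]$ with the same $k$. Here $k$ is the number of first-block down words that escape into the rest of the grid, which must equal the number of escaping across words by counting. I would then argue that each of the $p$ middle rows' west words must take its rook in one of the $k$ escaping columns, giving $k^p$ choices. Symmetrically, each middle column's north word takes one of $k$ escaping rows, giving another $k^p$. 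The final matching of the escaping first-block words with the escaping last-block words contributes $k!$ on each side. Note that $k$ can equal $m$ (the $i=0$ term) but never exceeds $\min(m_1,m_2)$, and $k \ge 1$ always. Reindexing $k = m - i$ then gives $\sum_{i=0}^{m-1} S(m_1,m-i)S(m_2,m-i)\bigl((m-i)^{p}(m-i)!\bigr)^2$.

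\textbf{Step 3: specialization.} Setting $m_1=m_2=m$ and $k=m-i$ immediately yields $\sum_{k=1}^m [S(m,k)k^{n-2m}k!]^2$. The hypothesis $m>1$ is only needed so that the shape is genuinely of the stated form with distinct outer blocks (for $m=1$ the permutation is the identity and the formula degenerates consistently anyway).

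\textbf{Main obstacle.} The hard step is Step 2: proving that rook placements restricted to the first block biject with set partitions into $k$ blocks (together with a marking of the $k$ escaping words), and that the middle choices are then independent of one another. I would first verify this by hand for $m_1 = 2,3$ and $p=0,1$, checking against $r(5)=17$ and $r(6)=73$. I would then try to prove it by induction on the number of middle rows, peeling off one middle black square at a time, and showing that it multiplies the count for fixed $k$ by exactly $k^2$.
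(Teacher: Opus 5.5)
Your outline has the same skeleton as the paper's proof: forced staircases, $S(m_1,k)S(m_2,k)$ from the two corner triangles, a factor $(k^{p}k!)^2$ from the rest, summed over $k$. However, Step~2 carries all the content, and it omits a needed argument and rests on a mechanism that is false. The omitted argument is why the two corner blocks leave the \emph{same} number $k$ of unused words. Inside one block, the equality of escaping across and down words is automatic, but between the two blocks it is not. The paper gets it from the north-east region $\{(r,c): r\le n-m_2,\ c\ge m_1+1,\ c>r\}$. The words meeting this region are the $n-m_2$ east words of its rows and the $n-m_1$ north words of its columns. A rook in the region uses one of each. A rook in the first block's triangle (SE of its anti-diagonal) uses one such across word and no such down word. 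A rook in the last block's triangle (NW of its anti-diagonal) uses one such down word and no such across word. So the first triangle holds exactly $m_1-m_2$ more rooks than the last: $m_1-k_1=(m_2-k_2)+(m_1-m_2)$, i.e.\ $k_1=k_2$.

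The false mechanism is the claim that each middle row's west word ``must take its rook in one of the $k$ escaping columns.'' Taken literally, this puts the $p$ middle west rooks into distinct first-block south words, which is impossible when $p>k$. Take the identity in $S_4$ (shape $(1,1,1,1)$, $p=2$, $k=1$). Its unique placement has row~3's west rook at $(3,2)$, in the south word of the middle black square $(2,2)$, because column~1's south word is already used by $(2,1)$. Row~1's east word gets its rook at $(1,2)$, which lies in a middle north word. So the final $k!$ is not a matching of escaping first-block words with escaping last-block words either.

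What is true is the following. Once the triangle rooks are fixed, the unattacked part of the north-east region is a Ferrers board with $p+k$ rows and $p+k$ columns. Its rows have lengths $k,k+1,\dots,k+p-1$, plus $k$ further rows of length $k+p$. Its shape depends only on $k$, not on which triangle placement was chosen. Filling rows from the shortest up gives $k$ choices for each of the $p$ short rows, and then $k!$. The south-west region is the mirror image, and no word meets both regions, which is what justifies multiplying the two factors. These facts, together with the classical count $S(m_1,k)$ for placements of $m_1-k$ non-attacking rooks on the staircase, give the formula. No extra marking of escaping words enters, since they are determined by the placement. The induction on $p$ you propose would have to establish exactly this Ferrers-board structure.
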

\begin{proof}
Suppose that $w \in S_n$ is the layered permutation of shape $(m_1, 1, 1, \dots, 1, m_2)$. Let $k = n - m_1 - m_2$ denote the number of $1$'s. For example, if $w = (6,1,1,1,1,1,3) \in S_{14}$, then $m_1 = 6$, $m_2 = 3$, $m = \min(m_1, m_2) = 3$, and $k = 5$, and we can picture $\Grid{w}$ as 
\[
\begin{ytableau}
*(white)&&&&&*(black)& *(color1)&*(color1)&*(color1)&*(color1)&*(color1)&*(color1)&*(color1)&*(color1) \\
&&&&*(black)&*(color2)&*(color1)&*(color1)&*(color1)&*(color1)&*(color1)&*(color1)&*(color1)&*(color1) \\
&&&*(black)&*(color2)&*(color2)&*(color1)&*(color1)&*(color1)&*(color1)&*(color1)&*(color1)&*(color1)&*(color1) \\
&&*(black)&*(color2)&*(color2)&*(color2)&*(color1)&*(color1)&*(color1)&*(color1)&*(color1)&*(color1)&*(color1)&*(color1)\\
&*(black)&*(color2)&*(color2)&*(color2)&*(color2)&*(color1)&*(color1)&*(color1)&*(color1)&*(color1)&*(color1)&*(color1)&*(color1)\\
*(black)&*(color2)&*(color2)&*(color2)&*(color2)&*(color2)&*(color1)&*(color1)&*(color1)&*(color1)&*(color1)&*(color1)&*(color1)&*(color1) \\
*(color3)&*(color3)&*(color3)&*(color3)&*(color3)&*(color3)&*(black)&*(color1)&*(color1)&*(color1)&*(color1)&*(color1)&*(color1)&*(color1) \\
*(color3)&*(color3)&*(color3)&*(color3)&*(color3)&*(color3)&*(color3)&*(black)&*(color1)&*(color1)&*(color1)&*(color1)&*(color1)&*(color1) \\
*(color3)&*(color3)&*(color3)&*(color3)&*(color3)&*(color3)&*(color3)&*(color3)&*(black)&*(color1)&*(color1)&*(color1)&*(color1)&*(color1) \\
*(color3)&*(color3)&*(color3)&*(color3)&*(color3)&*(color3)&*(color3)&*(color3)&*(color3)&*(black)&*(color1)&*(color1)&*(color1)&*(color1) \\
*(color3)&*(color3)&*(color3)&*(color3)&*(color3)&*(color3)&*(color3)&*(color3)&*(color3)&*(color3)&*(black)&*(color1)&*(color1)& *(color1)\\
*(color3)&*(color3)&*(color3)&*(color3)&*(color3)&*(color3)&*(color3)&*(color3)&*(color3)&*(color3)&*(color3)&*(color2)& *(color2) & *(black)\\
*(color3)&*(color3)&*(color3)&*(color3)&*(color3)&*(color3)&*(color3)&*(color3)&*(color3)&*(color3)&*(color3)& *(color2) & *(black)&\\
*(color3)&*(color3)&*(color3)&*(color3)&*(color3)&*(color3)&*(color3)&*(color3)&*(color3)&*(color3)& *(color3) & *(black) &&
\end{ytableau}
\] 
We divide $\Grid{w}$ into six regions, which we have colored above with two white regions, a red region, a blue region, and two yellow regions, as follows: in the NW corner, the $m_1 \times m_1$ block of squares is divided into a white triangular region and a yellow triangular region. Similarly, in the SE corner, the $m_2 \times m_2$ block of squares is divided into a white and yellow region. In any rook placement on $\Grid{w}$, there must be $m_1 - 1$ rooks in the NW white region, with the rooks placed immediately above the black squares. Similarly, there are $m_2 - 1$ rooks in the SE white region. The remaining $2n - 2 - (m_1 - 1) - (m_2 - 1) = 2n - m_1 - m_2$ rooks are in the red, blue, and yellow regions. 

\[
\begin{ytableau}
*(white)&&&&\Rook&*(black)& *(color1)&*(color1)&*(color1)&*(color1)&*(color1)&*(color1)&*(color1)&*(color1) \\
&&&\Rook&*(black)&*(color2)&*(color1)&*(color1)&*(color1)&*(color1)&*(color1)&*(color1)&*(color1)&*(color1) \\
&&\Rook&*(black)&*(color2)&*(color2)&*(color1)&*(color1)&*(color1)&*(color1)&*(color1)&*(color1)&*(color1)&*(color1) \\
&\Rook&*(black)&*(color2)&*(color2)&*(color2)&*(color1)&*(color1)&*(color1)&*(color1)&*(color1)&*(color1)&*(color1)&*(color1)\\
\Rook&*(black)&*(color2)&*(color2)&*(color2)&*(color2)&*(color1)&*(color1)&*(color1)&*(color1)&*(color1)&*(color1)&*(color1)&*(color1)\\
*(black)&*(color2)&*(color2)&*(color2)&*(color2)&*(color2)&*(color1)&*(color1)&*(color1)&*(color1)&*(color1)&*(color1)&*(color1)&*(color1) \\
*(color3)&*(color3)&*(color3)&*(color3)&*(color3)&*(color3)&*(black)&*(color1)&*(color1)&*(color1)&*(color1)&*(color1)&*(color1)&*(color1) \\
*(color3)&*(color3)&*(color3)&*(color3)&*(color3)&*(color3)&*(color3)&*(black)&*(color1)&*(color1)&*(color1)&*(color1)&*(color1)&*(color1) \\
*(color3)&*(color3)&*(color3)&*(color3)&*(color3)&*(color3)&*(color3)&*(color3)&*(black)&*(color1)&*(color1)&*(color1)&*(color1)&*(color1) \\
*(color3)&*(color3)&*(color3)&*(color3)&*(color3)&*(color3)&*(color3)&*(color3)&*(color3)&*(black)&*(color1)&*(color1)&*(color1)&*(color1) \\
*(color3)&*(color3)&*(color3)&*(color3)&*(color3)&*(color3)&*(color3)&*(color3)&*(color3)&*(color3)&*(black)&*(color1)&*(color1)& *(color1)\\
*(color3)&*(color3)&*(color3)&*(color3)&*(color3)&*(color3)&*(color3)&*(color3)&*(color3)&*(color3)&*(color3)&*(color2)& *(color2) & *(black)\\
*(color3)&*(color3)&*(color3)&*(color3)&*(color3)&*(color3)&*(color3)&*(color3)&*(color3)&*(color3)&*(color3)& *(color2) & *(black)&\Rook\\
*(color3)&*(color3)&*(color3)&*(color3)&*(color3)&*(color3)&*(color3)&*(color3)&*(color3)&*(color3)& *(color3) & *(black) &\Rook&
\end{ytableau}
\]

The red region forms a rectangle in the NE corner with a triangle removed from its SW corner. It contains the squares in the northernmost  $n - m_2$ rows and  the easternmost $n - m_1$ columns, other than the squares south of the $k$ black squares in the middle of $\Grid{w}$. Therefore, there are $n- m_2$ across words which contain any red squares and $n- m_1$ down words which contain any red squares. Without loss of generality, assume that $m_2 \leq m_1$, so that the SE yellow region is no larger than the NW yellow region.

Consider the words that contain red squares.  A rook in each such word must belong to either the red or one of the yellow regions.  Each rook in the red region attacks a down word and an across word from this collection, while each rook in the SE yellow region attacks only a down word that intersects the red region and each rook the NW yellow region attacks only an across word that intersects the red region.  Therefore, in any rook placement on $\Grid{w}$, the NW yellow region must contain $m_1 - m_2$ more rooks than the SE yellow region. 

We now count the number of rook placements on $\Grid{w}$ by considering how many rooks are in the SE yellow region. Consider a rook placement in which the SE yellow region contains $i$ rooks, whence the NW yellow region contains $i + (m_1 - m_2)$ rooks. These rooks must occur in distinct across words, so $0 \leq i \leq m_2 - 1$. There are exactly $S(m_2, m_2 - i) = S(m_2, m - i)$ ways to place $i$ nonattacking rooks in the SE yellow region \cite[\S5]{kr}. Similarly, there are $S(m_1, m_1 - (i + m_1 - m_2) = S(m_1, m - i)$ ways to place the rooks in the NW yellow region.

For each placement of $i$ rooks in the SE yellow region and $i + m_1 - m_2$ rooks in the NW yellow region, the squares in the red region which are not attacked by any of these rooks form $n - m_1 - i$ rows, of lengths $m_2 - i$, $m_2 - i + 1$, \dots, $n - m_1 - i$, $n-m_1-i$, $n-m_1-i, \dots$. We illustrate this in our running example where we have placed $i = 1$ rook in the SE yellow region (and hence $i + m_1 - m_2 = 4$ rooks in the NW yellow region) and faded the color of every square which is attacked by a rook. The unattacked portion of the red region consists of exactly $n - m_1 - i = 7$ rows, of lengths $2,3, 4,5, 6, 7, 7$: 
\[
\begin{ytableau}
*(white)&&&&\Rook&*(black)& *(color1)&*(color1)&*(color1)&*(color1)&*(color1)&*(color1)&*(lc1)&*(color1) \\
&&&\Rook&*(black)&*(lc2)&*(color1)&*(color1)&*(color1)&*(color1)&*(color1)&*(color1)&*(lc1)&*(color1) \\
&&\Rook&*(black)&*(lc2)&*(lc2)\Rook&*(lc1)&*(lc1)&*(lc1)&*(lc1)&*(lc1)&*(lc1)&*(lc1)&*(lc1) \\
&\Rook&*(black)&*(lc2)\Rook&*(lc2)&*(lc2)&*(lc1)&*(lc1)&*(lc1)&*(lc1)&*(lc1)&*(lc1)&*(lc1)&*(lc1)\\
\Rook&*(black)&*(lc2)&*(lc2)&*(lc2)\Rook&*(lc2)&*(lc1)&*(lc1)&*(lc1)&*(lc1)&*(lc1)&*(lc1)&*(lc1)&*(lc1)\\
*(black)&*(lc2)\Rook&*(lc2)&*(lc2)&*(lc2)&*(lc2)&*(lc1)&*(lc1)&*(lc1)&*(lc1)&*(lc1)&*(lc1)&*(lc1)&*(lc1) \\
*(color3)&*(lc3)&*(color3)&*(lc3)&*(lc3)&*(lc3)&*(black)&*(color1)&*(color1)&*(color1)&*(color1)&*(color1)&*(lc1)&*(color1) \\
*(color3)&*(lc3)&*(color3)&*(lc3)&*(lc3)&*(lc3)&*(color3)&*(black)&*(color1)&*(color1)&*(color1)&*(color1)&*(lc1)&*(color1) \\
*(color3)&*(lc3)&*(color3)&*(lc3)&*(lc3)&*(lc3)&*(color3)&*(color3)&*(black)&*(color1)&*(color1)&*(color1)&*(lc1)&*(color1) \\
*(color3)&*(lc3)&*(color3)&*(lc3)&*(lc3)&*(lc3)&*(color3)&*(color3)&*(color3)&*(black)&*(color1)&*(color1)&*(lc1)&*(color1) \\
*(color3)&*(lc3)&*(color3)&*(lc3)&*(lc3)&*(lc3)&*(color3)&*(color3)&*(color3)&*(color3)&*(black)&*(color1)&*(lc1)& *(color1)\\
*(lc3)&*(lc3)&*(lc3)&*(lc3)&*(lc3)&*(lc3)&*(lc3)&*(lc3)&*(lc3)&*(lc3)&*(lc3)& *(lc2)& *(lc2)\Rook & *(black)\\
*(color3)&*(lc3)&*(color3)&*(lc3)&*(lc3)&*(lc3)&*(color3)&*(color3)&*(color3)&*(color3)&*(color3)& *(color2) & *(black)&\Rook\\
*(color3)&*(lc3)&*(color3)&*(lc3)&*(lc3)&*(lc3)&*(color3)&*(color3)&*(color3)&*(color3)& *(color3) & *(black) &\Rook&
\end{ytableau}
\]
Since no rooks in the blue region attack any red squares (and vice versa), each completion of this partial rook placement consists of two independent pieces, a set of red rooks that attack the remaining red squares and a set of blue rooks that attack the remaining blue squares.  Moreover, by discarding the irrelevant already-attacked rows and columns, we see that the number of rook placements in both the red and blue regions is the same as the number of rook placements on an $(n - m_1 - i) \times (n - m_1 - i)$ square with a triangular region removed from a corner so that the southernmost row contains $m - i$ squares. We compute this quantity now.

There are $m - i$ ways to place a rook in the southernmost row, after which there are $m - i$ ways to place a rook in the next row (regardless of where the first rook was placed), and this continues for the southernmost $n - m_1 - m_2$ rows. After placing these $n - m_1 - m_2$ rooks, the remaining red squares which are not attacked form an $(m - i) \times (m-i)$ square, which admits $(m - i)!$ rooks placements. Altogether, therefore have $(m - i)^{n - m_1 - m_2} \cdot (m - i)!$ rook placements in each of the red and blue regions.  

Combining this with the previous calculations, there are $S(m_1, m- i)S(m_2, m-i)[(m - i)^{n - m_1 - m_2} \cdot (m - i)!]^2$ rook placements with $i$ rooks in the SE yellow regions. By letting $i$ run between $0$ and $m - 1$ and summing, we obtain the desired formula.
\end{proof}

In light of Conjecture~\ref{conj:maxlayered}, it is natural to ask which values of $m_1, m_2$ (in terms of $n$) maximize the number of rook placements given by Proposition~\ref{prop:countlayered}.  The following proposition shows that the maximum is obtained when $m_1$ is equal to $m_2$, and gives a small interval in which this common value must lie.

\begin{proposition}
\label{prop:layeredmax}
    For a positive integer $n$, consider the collection of integers
    \[
    \sum_{i = 0}^{m-1} S(m_1, m - i)S(m_2, m - i)\left((m-i)^{n - m_1 - m_2} \cdot (m-i)!\right)^2
    \]
    over all pairs $m_1, m_2$ of integers with $0 \leq m := \min(m_1, m_2)$ and $m_1 + m_2 \leq n$.  Then the largest of these numbers is achieved when
    \[
    m_1 = m_2 \in [m_*, m^*]
    \]
    where $m_*, m^*$ are respectively the (unique) positive solutions of
    $n = \frac{\ln(m_* + 1)}{\ln(1 + 1/m_*)} + 2m_*$ and $n = \frac{\ln\big((3m^* + 1)(m^* - 1))/4\big)}{2 \ln(1 + 1/m^*)} + 2m^* + 1$.
\end{proposition}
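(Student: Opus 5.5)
The proof splits into two reductions: first show that for fixed $n$ an unbalanced pair $(m_1,m_2)$ is beaten by a balanced one, and then locate the best balanced value $m_1=m_2=m$ by comparing consecutive values of $m$. Write
\[
F(m_1,m_2)=\sum_{j=1}^{m} S(m_1,j)S(m_2,j)\bigl(j^{\,n-m_1-m_2}\,j!\bigr)^2, \qquad m=\min(m_1,m_2),
\]
which is the sum in the statement with $j=m-i$.

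\textbf{Reducing to $m_1=m_2$.} Assume $m_1>m_2=m$ and compare $F(m_1,m_2)$ with $F(m,m)$ term by term; both sums run over $1\le j\le m$. The $j$th term of $F(m,m)$ is $S(m,j)^2 (j^{n-2m} j!)^2$, so the termwise inequality reduces to
\[
S(m_1,j)\le j^{2(m_1-m)}\,S(m,j).
\]
By induction on $m_1$ it is enough to prove
\[
S(a+1,j)\le j^2 S(a,j).
\]
I would get this from the recurrence $S(a+1,j)=jS(a,j)+S(a,j-1)$ together with $S(a,j-1)\le (j^2-j)S(a,j)$. To prove the latter I would inject $(j-1)$-block partitions into pairs consisting of a $j$-block partition and a choice of an ordered pair of distinct blocks; the extremal case $j=a$ reads $\binom a2\le a^2-a$, which holds. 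The inequality is strict unless $m_1=m$, so the maximum is achieved only on the diagonal.

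\textbf{Balanced case, lower bound $m_*$.} For $m_1=m_2=m$ the quantity is $f(m)=\sum_{k=1}^m [S(m,k)k^{n-2m}k!]^2$. The dominant term is $k=m$, namely $(m^{n-2m}m!)^2$. The ratio of the top term at $m+1$ to the top term at $m$ is
\[
\Bigl(\frac{(1+1/m)^{\,n-2m}}{m+1}\Bigr)^2 .
\]
This ratio exceeds $1$ exactly when $n>\frac{\ln(m+1)}{\ln(1+1/m)}+2m$, which is the defining equation for $m_*$. To show $f(m+1)>f(m)$ for $m<m_*$, I would compare term by term with a shift: map the $k$th term of $f(m)$ to the $(k+1)$st term of $f(m+1)$. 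This uses $S(m+1,k+1)\ge S(m,k)$ and the same power comparison, which holds a fortiori for smaller $k$ because $k\mapsto(1+1/k)^{n-2m}/(k+1)$ is decreasing in the relevant range.

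\textbf{Balanced case, upper bound $m^*$.} For $m>m^*$ I need $f(m)<f(m-1)$, so the lower-order terms of $f(m)$ can no longer be ignored. I would bound $f(m)$ above by its top two terms times a geometric tail. Since $S(m,m-1)=\binom m2$, the second term is $\binom m2^2((m-1)^{n-2m}(m-1)!)^2$. Factoring the top two terms against the leading term of $f(m-1)$ produces the factor $(3m+1)(m-1)/4$ and the shift by $2m+1$ in the definition of $m^*$. I would control the remaining terms $k\le m-2$ by showing their ratios to the $k=m-1$ term decay at least geometrically once $n-2m$ is of order $m\ln m$. I would also verify that each defining equation has a unique positive solution, using monotonicity of the right-hand sides.

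\textbf{Main obstacle.} I expect the upper-bound step to be the hardest: it requires rigorous tail control of the Stirling-weighted sum with explicit constants matching the stated equation for $m^*$. The term-by-term inequality $S(a,j-1)\le (j^2-j)S(a,j)$ in the reduction step also needs care.
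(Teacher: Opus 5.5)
Your reduction to $m_1=m_2$ and your lower-bound step are essentially the paper's. The paper proves $S(a+1,k)\le\binom{k+1}{2}S(a,k)$ by induction on the recurrence, which is equivalent to $S(a,k-1)\le\binom{k}{2}S(a,k)$; your merge/split counting gives this directly, so that step is fine. It then uses $\binom{k+1}{2}\le k^2$. For $m\le m_*$ it compares the $k$th term of $f(m)$ with the $(k+1)$st term of $f(m+1)$, exactly as you propose. Your strictness remark is slightly off: the $j=1$ terms always agree, so for instance $F(1,2)=F(1,1)=1$ when $n=3$. The statement does not need strictness, though.

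\textbf{The gap is in the upper-bound step.} Write $T_m=(m^{n-2m}m!)^2$ and $T_{m-1}=\bigl(\binom m2(m-1)^{n-2m}(m-1)!\bigr)^2$ for the top two terms of $f(m)$, and $L=\bigl((m-1)^{n-2m+2}(m-1)!\bigr)^2$ for the top term of $f(m-1)$.

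\emph{Why your tail plan fails.} With $p=m-1$, the inequality $T_m+T_{m-1}\le L$ is exactly $\bigl(\frac{p+1}{p}\bigr)^{2(n-2p-1)}\le\frac{(3p+1)(p-1)}{4}$, i.e.\ the defining condition for $m^*$, and it is sharp at the threshold. Your plan first inflates $f(m)$ by a geometric-tail factor and then compares with $L$ alone. That requires $T_m+(1+\delta)T_{m-1}\le L$ with $\delta>0$, which defines a different, larger threshold. So even if you proved the decay (you have not, and ``$n-2m$ of order $m\ln m$'' is not an explicit hypothesis), you would prove a weaker statement than the one with the stated $m^*$.

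\emph{The missing observation.} No tail estimate is needed. For $k\le m-2$ the $k$th term of $f(m)$ is at most the $k$th term of $f(m-1)$, since
\[
\bigl(S(m,k)k^{n-2m}k!\bigr)^2\le\bigl(S(m-1,k)k^{n-2m+2}k!\bigr)^2 \iff S(m,k)\le k^2S(m-1,k),
\]
and this is precisely the inequality you already proved for the reduction. The low terms of $f(m)$ are absorbed termwise by the low terms of $f(m-1)$, and only $T_m+T_{m-1}\le L$ remains. This is the paper's argument, written with $m,m+1$ in place of $m-1,m$. It uses the exact value $S(m,m-1)=\binom m2$, rather than the crude $k^2$ bound, to obtain the constant $(3m+1)(m-1)/4$.

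Finally, this condition yields $f(m)\le f(m-1)$ only for $m-1\ge m^*$, not for every integer $m>m^*$. What one actually obtains is a maximizer with $m\le\lceil m^*\rceil$; the paper's own proof has the same slack.
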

\begin{proof}
We begin by proving that $S(n + 1, k) \leq \binom{k + 1}{2} S(n, k)$ for all integers $1 \leq k \leq n$.  For all $n \geq 1$, we have equality for $k = 1$ (both sides are equal to $1$) and $k = n$ (both sides are equal to $\binom{n + 1}{2}$).  Now proceed by induction on the pair $(n, k)$.  For any integers $1 < k < n$, we have
\begin{align*}
S(n + 1, k) & = k S(n, k) + S(n, k - 1) \\
& \leq k \binom{k + 1}{2} S(n - 1, k) +  \binom{k}{2} S(n - 1, k - 1) && \text{(inductive hypothesis twice)} \\
& \leq \binom{k + 1}{2} \big(k S(n - 1, k) + S(n - 1, k - 1)\big) && \text{(triangular numbers are nondecreasing)}\\
& = \binom{k + 1}{2} S(n, k).
\end{align*}
So the claim holds by induction.

It follows immediately that if $m = m_1 < m_2$ then 
\[
S(m_2, m - i) \leq \binom{m - i + 1}{2} S(m_2 - 1, m - i) \leq (m - i)^2 S(m_2 - 1, m - i),
\]
and therefore that
\begin{multline*}
\sum_{i = 0}^{m-1} S(m_1, m - i)S(m_2, m - i)\left((m-i)^{n - m_1 - m_2} \cdot (m-i)!\right)^2
 \leq {}\\
\sum_{i = 0}^{m-1} S(m_1, m - i) \cdot S(m_2 - 1, m - i)\left((m-i)^{n - m_1 - (m_2 - 1)} \cdot (m-i)!\right)^2.
\end{multline*}
By repeating this $m_2 - m_1$ times, we conclude that the maximum is achieved for a pair with $m = m_1 = m_2$, in which case the sum in question can be rewritten as
 \begin{equation}
 \label{eq:sum in symmetic case}
 \sum_{k = 1}^m \left(S(m, k) \cdot k^{n-2m} \cdot k!\right)^2.
 \end{equation}
We now consider which value of $m$ maximizes this expression.

The function $\frac{\ln(m + 1)}{\ln(1 + 1/m)} + 2m$ is obviously continuous and increasing for $m \geq 1$, so for $n \geq 3$ there is a unique real number $m_*$ such that $n = \frac{\ln(m_* + 1)}{\ln(1 + 1/m_*)} + 2m_*$.  Then for $1 \leq k \leq m \leq m_*$ we have
\begin{align*}
\frac{S(m + 1, k + 1) (k + 1)^{n - 2(m + 1)} (k + 1)!}{S(m, k) k^{n - 2m} k! } & \geq \frac{(k + 1)^{n - 2m - 1}}{k^{n - 2m}} \\
& = \frac{1}{k + 1} \left(1 + \frac{1}{k}\right)^{n - 2m} \\
& \geq \frac{1}{m_* + 1 } \left(\frac{m_* + 1}{m_*}\right)^{n - 2m_*}  = 1.
\end{align*}
Clearing the denominator, squaring both sides, and summing over $k$ gives
\begin{align*}
\sum_{k = 1}^m \left(S(m, k) \cdot k^{n-2m} \cdot k!\right)^2 & \leq \sum_{k = 1}^{m} \left(S(m + 1, k + 1) \cdot (k + 1)^{n-2(m + 1)} \cdot (k + 1)!\right)^2 \\
& < \sum_{k = 1}^{m + 1} \left(S(m + 1, k) \cdot k^{n-2(m + 1)} \cdot k!\right)^2.
\end{align*}
Hence the maximum value of \eqref{eq:sum in symmetic case} is obtained for $m > m_*$.

For any $n$ and for $1 \leq k \leq m - 1$ we have
\begin{equation}
\label{eq:decreasing terms}    
S(m, k) k^{n - 2m} k! \geq \binom{k + 1}{2} S(m, k) k^{n - 2m - 2} k! \geq S(m + 1, k)k^{n - 2m - 2} k!.
\end{equation}
The function $\frac{\ln\big((3m + 1)(m - 1))/4\big)}{2 \ln(1 + 1/m)} + 2m + 1$ is obviously continuous and increasing for $m > 1$, so there is a unique real number $m^*$ such that $n = \frac{\ln\big((3m^* + 1)(m^* - 1))/4\big)}{2 \ln(1 + 1/m^*)} + 2m^* + 1$.  Then for $m \geq m^*$ we have
\[
2(n - 2m - 1) \ln(1 + 1/m) \leq \ln\frac{3m^2 - 2m - 1}{4},
\]
and hence
\[
\left(\frac{m + 1}{m}\right)^{2(n - 2m - 1)} \leq \frac{3m^2 - 2m - 1}{4} = m^2 - \frac{(m + 1)^2}{4}.
\]
Multiplying both sides through by $m^{2(n - 2m - 1)}$ and rearranging, this is equivalent to
\begin{align*}
 m^{2(n - 2m)} & \geq (m + 1)^2 \cdot (m + 1)^{2(n - 2m - 2)} + \binom{m + 1}{2}^2 \cdot m^{2(n - 2m - 2)} \\
 & = \left(
 (m + 1)^{n - 2(m + 1)} \cdot (m + 1)\right)^2 + \left(S(m + 1, m) m^{n - 2(m + 1)}\right)^2.
\end{align*}
Multiplying through by $(m!)^2$ and summing together with the $m - 1$ terms from \eqref{eq:decreasing terms} gives \[
\sum_{k = 1}^m \left(S(m, k) \cdot k^{n-2m} \cdot k!\right)^2 \geq \sum_{k = 1}^{m + 1} \left(S(m + 1, k) \cdot k^{n-2(m + 1)} \cdot k!\right)^2.
\]
Hence the largest value of \eqref{eq:sum in symmetic case} is obtained for $m \leq m^*$.
\end{proof}

One can show that the length of the interval $[m_*, m^*]$ in Proposition~\ref{prop:layeredmax} grows slowly with $n$, but is unbounded.  Computations suggest that the value $m_*$ is essentially the true optimal value.

\begin{conj}
For a fixed positive integer $n$, 
the maximum value of the expression in \eqref{eq:sum in symmetic case} is obtained when $m \in (m_*, m_* + 2)$.
\end{conj}

Empirically, it seems that for this optimal value of $m$, the largest term of \eqref{eq:sum in symmetic case} dominates the sum.  Thus, have the following simpler lower bound on the asymptotic growth of the $r(n) = \max_{w \in S_n} |\RP(\Grid{w})|$.

\begin{corollary}
For all $n$,
\[
r(n) \geq \left(m^{n - 2m} \cdot m!\right)^2
\]
where $m = \lceil x \rceil$ and $x$ is the unique positive solution of the equation $\ln(1 + x)/\ln(1 + 1/x) + 2x = n$.
\end{corollary}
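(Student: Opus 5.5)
The plan is to exhibit a single permutation grid that attains the bound, namely the layered permutation $w\in S_n$ of shape $(m,1,\dots,1,m)$ with $m=\lceil x\rceil$, and then to keep only one term of the exact count from Proposition~\ref{prop:countlayered}. Since $r(n)=\max_{w\in S_n}|\RP(\Grid{w})|$ and every term of that formula is nonnegative, we have
\[
r(n)\;\geq\;\sum_{k=1}^m\left[S(m,k)\,k^{n-2m}\,k!\right]^2\;\geq\;\left[S(m,m)\,m^{n-2m}\,m!\right]^2=\left(m^{n-2m}\cdot m!\right)^2 ,
\]
where the last equality uses $S(m,m)=1$. So the argument reduces to checking that this layered permutation exists, that is, that $2m\leq n$, and that the hypothesis $m>1$ in the second formula of Proposition~\ref{prop:countlayered} holds (or that we can use the general formula when it does not).

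The one real step is to show $m=\lceil x\rceil$ satisfies $2m\leq n$. Set $h(t)=\ln(1+t)/\ln(1+1/t)+2t$. This function is continuous and increasing, as already used in the proof of Proposition~\ref{prop:layeredmax}, so $x$ is well defined and $h(x)=n$.
\begin{itemize}
\item For $t\geq1$ we have $\ln(1+t)\geq\ln(1+1/t)$, so $h(t)\geq 2t+1$. Hence when $x\geq1$, $n\geq 2x+1$, and therefore $2\lceil x\rceil<2x+2\leq n+1$, giving $2m\leq n$.
\item When $x<1$ we get $m=1$, and we need $n\geq2$. Note that $h(1)=3$. For $n=1$ and $n=2$ the claimed bound is small: with $m=1$ and $n=1$ it would read $r(1)\geq 1^{-1}=1$, which is true since $r(1)=1$. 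I would dispose of these cases by hand using the values $r(1)=r(2)=1$.
\end{itemize}

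The remaining technicality is $m=1$, which falls outside the ``in particular'' formula. In that case we use the general formula of Proposition~\ref{prop:countlayered} with $m_1=m_2=1$. It gives the single term $S(1,1)^2(1\cdot1)^2=1$, which equals $(1^{n-2}\cdot1!)^2$, so the bound still holds. Equivalently, every permutation grid has at least one placement by Proposition~\ref{prop:existence}.

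I expect no real obstacle here. The only care needed is the boundary bookkeeping: making sure $2\lceil x\rceil\leq n$ for all $n\geq3$, and handling tiny $n$ directly.
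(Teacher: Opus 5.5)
Your proposal is correct and is essentially the argument the paper has in mind. You take the layered permutation of shape $(m,1,\dots,1,m)$, apply Proposition~\ref{prop:countlayered}, and discard every term except $k=m$ (using $S(m,m)=1$). Your check that $2\lceil x\rceil\le n$ (via $h(t)\ge 2t+1$ for $t\ge 1$) and your separate handling of $m=1$ through Proposition~\ref{prop:existence} fill in bookkeeping that the paper leaves implicit.
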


Expressing the right side directly in terms of $n$ is complicated by the presence of logarithmic terms and the superexponential dependence on $m$.

\section{Questions}
\label{sec:questions}

We end with an assortment of open questions, in addition to those mentioned above.

\subsection{Permutation grids}

In Proposition~\ref{prop:existence}, we showed that for all $w \in S_n$, $\Grid{w}$ admits a rook placement. The proof provides a recursive construction of a rook placement on $\Grid{w}$ from a rook placement on the grid with one row and one column deleted. Is a better description possible?

\begin{question}
\label{q:4RPonperm}
    Is there an explicit (non-recursive) description of one (or more, or all) rook placements on a permutation grid, in terms of the permutation?
\end{question}

\subsection{Which numbers count rook placements?}

Conjecture~\ref{conj:rookcounts} and Question~\ref{q:6exactlykgeneral} raise the question of which integers $r$ can be the number of rook placements on a permutation grid (and what this says about the structure of the permutation). Questions about rook placement counts are interesting even outside of the permutation grid setting.

\begin{question}
\label{q:7whichk}
    For which positive integers $r$ does there exist a crossword grid which admits exactly $r$ rook placements? Can the crossword grids which admit exactly $r$ rook placements be characterized?
\end{question}

Suppose there exist crossword grids $G_1$ and $G_2$ of size $n_1 \times n_1$ and $n_2 \times n_2$ admitting $r_1$ and $r_2$ rook placements, respectively. Then we can construct an $(n_1 + n_2) \times (n_1 + n_2)$ grid by placing $G_1$ in the NW corner and $G_2$ in the SE corner with the remaining squares being black. This grid clearly admits $r_1r_2$ rook placements, so it suffices to answer Question~\ref{q:7whichk} for prime $r$.

\subsection{Average behavior}

Above we have mostly focused on extreme numbers of rook placements, but one might also consider average behavior. For example, of the $16$ possible $2 \times 2$ grids (with no symmetry or connectedness condition), $12$ have the same number of across and down words; these all admit rook placements, and the expected number of rook placements among them is $13/12$ (or $13/16$ over all $2 \times 2$ grids). Of the $3 \times 3$ grids, $234$ of $512$ have the same number of across and down words; of those, $209/234$ admit rook placements, and the expected number of rook placements among them is $266/234$ (or $266/512$ over all $3 \times 3$ grids). 

\begin{question}
Fix a positive integer $n$. What fraction of $n \times n$ grids admit a rook placement? How does this fraction behave asymptotically as $n \to \infty$? What is the expected number of rook placements on a random $n \times n$ grid?  
\end{question}

One could also ask the same questions conditional on the number (density) of black squares in the grid. Of course, the expected number of rook placements on a grid with $k=0$ black squares is $n!$. Since we computed exactly how many rook placements a grid with $1$ black square admits (just before Question~\ref{q:2mostperk}), we can also see that as $n \to \infty$, the expected number of rook placements on a grid with $k = 1$ black square is $n^4 (n - 3)! \cdot \int_0^1\int_0^1 a(1 - a)b(1 - b) \, da \, db = \frac{n^4}{36} (n - 3)!$.
For $k$ finite as $n \to \infty$, it should follow that the expected number of rook placements is  asymptotic to $\frac{n^k}{36^k} n!$, since in this case the probability that two black squares end up in the same row or column tends to $0$.  

It would be interesting to study the average number of rook placements for $k$ black squares as $k$ varies. When $k = o(\sqrt{n})$ the expected number of pairs of black squares in the same row or column is $\leq \binom{k}{2} \cdot \frac{2}{n} < \frac{k^2}{n} = o(1)$, so in this regime one might still hope for the same asymptotic average $\frac{n^k}{36^k} n!$ as $n \to \infty$. 

When $k = o(n)$, the probability that two black squares are adjacent or that there are any black squares on the boundary is at most $\binom{k}{2} \cdot \frac{4}{n^2} + k \cdot \frac{4n}{n^2} < \frac{2k^2}{n^2} + \frac{4k}{n} \to 0$, so the probability that there are the same number of down and across words tends to 1 as $n \to \infty$. What is the average number of rook placements in this regime? What can be said when $n \ll k \ll n^2$ or when $k = \Theta(n^2)$?

\subsection{ASMs}

In light of Proposition~\ref{prop:ASMs}, the remaining results of Section~\ref{sec:permgrids} can be viewed as counting or bounding the number of ASMs whose $-1$s occupy a prescribed set of positions.  Thus, all the questions we've posed can be naturally carried over to this context.  For example, which sets of positions for the $-1$s admit at least one ASM (i.e., which sparse crossword grids admit a rook placement)?  Which admit a unique ASM?  Which admit the largest number of ASMs, and what is this maximum?

\subsection{Fractional rook placements?}

The process of $k$-inflation (see Lemma~\ref{lem:inflation}) suggests a theory of \emph{fractional rook placements}, in which each white square of the grid is filled with a real number in $[0, 1]$, and the sum of the numbers in each word must be equal to $1$. (For a reference in fractional matching theory on graphs, see \cite{su11}.)

\begin{question}
\label{q:fracrook}
Can anything interesting be said about fractional rook placements on crossword grids? 
\end{question}

\subsection{Non-square grids}

With the exception of Proposition~\ref{prop:upper bound}, we have chosen to focus on square grids in this paper; however, many of the same questions can be asked on grids of general size.

\begin{question}
\label{q:9nonsquare}
What can be said about rook placements on non-square grids?
\end{question}

\subsection{A complete rook theory}

In classical rook theory, it is of considerable interest to study not just maximum rook placements on a board $B$, but also the \emph{rook polynomial} $R_B(x) = \sum_{k} r_k(B) x^k$ whose coefficient $r_k(B)$ counts the placements of $k$ non-attacking rooks on $B$. The rook polynomial has connections to Stirling numbers, chromatic polynomials, and the representation theory of the symmetric group.

On a grid $G$, it is possible to define a polynomial tracking partial not-necessarily-complete non-attacking rook placements on $G$ by size. This gives a polynomial which is precisely the matching polynomial of $\Graph{G}$.

\begin{question}
    \label{q:10rooktheory}
Using the matching polynomial for $\Graph{G}$, which results in classical rook theory have analogues in this context? Are there $q$-analogues?
\end{question}

\bibliographystyle{amsalpha}
\bibliography{biblio}

\end{document}